\pgfplotsset{compat=1.18}
\theoremstyle{definition}
\newtheorem{defn}{\newline\protect\definitionname}[section]
\theoremstyle{plain}
\newtheorem{lem}[defn]{\newline\protect\lemmaname}
\theoremstyle{plain}
\newtheorem{thm}[defn]{\newline\protect\theoremname}
\theoremstyle{plain}
\newtheorem*{thm*}{Theorem}
\newtheorem{prop}[defn]{\newline\protect\propositionname}
\theoremstyle{plain}
\newtheorem{cor}[defn]{\newline\protect\corollaryname}
\theoremstyle{remark}
\newtheorem{rem}[defn]{\newline\protect\remarkname}
\theoremstyle{definition}
\providecommand{\definitionname}{Definition}
\providecommand{\examplename}{Example}
\providecommand{\lemmaname}{Lemma}
\providecommand{\propositionname}{Proposition}
\providecommand{\remarkname}{Remark}
\providecommand{\corollaryname}{Corollary}
\providecommand{\theoremname}{Theorem}
\numberwithin{defn}{section}
\newcommand{\quand}{\quad\text{ and } \quad}
\def\namedlabel#1#2{\begingroup
    #2%
    \def\@currentlabel{#2}%
    \phantomsection\label{#1}\endgroup
}
\title{Quenched Mixing Rates for Doubly Intermittent Maps }
\author[1]{Mubarak Muhammad \footnote{\href{mailto:mubarak@aims.edu.gh}{\texttt{mubarak@aims.edu.gh}}}}
\author[2]{Marks Ruziboev \footnote{\href{mailto:    marx.ruziboev@gmail.com }{\texttt{    marx.ruziboev@gmail.com}}\\{\texttt{ Mubarak Muhammad is a Faculty Postdoctoral Fellow, School of Mathematical Sciences, Hebei Normal University, Shijiazhuang, Hebei Province, 050024 China.}}}}
\affil[1]{School of Mathematical Sciences, Hebei Normal University, Shijiazhuang, Hebei Province, 050024 China}
\affil[2]{V.I.Romanovskiy Institute of Mathematics, Academy of Sciences of the Republic of Uzbekistan, Tashkent, Uzbekistan}
\date{}
\begin{document}

\maketitle
\begin{abstract}

We study quenched mixing rates for random compositions of two classes of interval
maps with two indifferent fixed points and a singularity at the origin: Pikovsky maps
and Grossmann--Horner maps. For the Pikovsky family, each fibre map preserves
Lebesgue measure, so the equivariant sample measures are given by
\(\mu_\omega=m\). For the Grossmann--Horner family, we construct an equivariant
family \((\mu_\omega)_{\omega\in\Omega}\) of absolutely continuous probability
measures. Using random Young towers, we prove quenched polynomial decay of both
future and past fibre correlations for bounded observables against H\"older
observables. The rates are determined by quenched return time tail estimates obtained
from endpoint drift bounds for the random cocycle.
\end{abstract}
\begin{figure}[ht]
\centering
\makebox[\textwidth][c]{%
\begin{subfigure}{.48\textwidth}
\centering
\begin{tikzpicture}
\begin{axis}[
    width=7cm,
    height=7cm,
    xmin=0, xmax=1,
    ymin=0, ymax=1,
    axis lines=box,
    ticks=none,
    clip=false,
]

\addplot[
    gray,
    dashed,
    domain=0:1,
    samples=2
] {x};

\addplot[
    red,
    thick,
    domain=0:0.499,
    samples=200
] {x*(1+(2*x)^0.5)};

\addplot[
    red,
    thick,
    domain=0.501:1,
    samples=200
] {-(1-x)*(1+(2*(1-x))^0.5)+1};

\end{axis}
\end{tikzpicture}
\caption{Pikovsky Map}
\label{graph_g}
\end{subfigure}%
\hspace{-1.15cm}
\begin{subfigure}{.48\textwidth}
\centering

\begin{tikzpicture}
\begin{axis}[
    width=7cm,
    height=7cm,
    xmin=0, xmax=1,
    ymin=0, ymax=1,
    axis lines=box,
    ticks=none,
    clip=false,
]

\addplot[
    gray,
    dashed,
    domain=0:1,
    samples=2
] {x};

\addplot[
    Red,
    thick,
    domain=0:0.499,
    samples=200
] {x*(1+(2*x)^0.5)};

\addplot[
    red,
    thick,
    domain=0.501:1,
    samples=200
] {(1-x)*(1+(2*(1-x))^0.5)};

\end{axis}
\end{tikzpicture}
\caption{Grossmann--Horner Map}
\label{graph_h}
\end{subfigure}%
}
\end{figure}
\section{Setup and Notation}\label{sec:setup}

We introduce the class of random dynamical systems considered in the paper and fix the basic notation.

\medskip

Let \(X\subset\mathbb{R}\) be a compact interval and let
\(
\mathcal{F}=\{f_t:X\to X \mid t\in\mathcal{A}\}
\)
be a parametrized family of maps. The parameter space \(\mathcal{A}\) is equipped with a probability measure \(\nu\). We define
\[
\Omega=\mathcal{A}^{\mathbb{Z}},
\qquad
\mathbb{P}=\nu^{\mathbb{Z}},
\]
and let \(\sigma:\Omega\to\Omega\) be the left shift. Thus
\((\Omega,\mathcal B(\Omega),\mathbb P,\sigma)\)
is an invertible measure preserving system.

For \(\omega=(\omega_j)_{j\in\mathbb Z}\in\Omega\), set \(f_\omega=f_{\omega_0}\). The associated cocycle is
\[
f_\omega^n
=
f_{\sigma^{n-1}\omega}\circ\cdots\circ f_\omega,
\qquad n\ge1,
\]
with \(f_\omega^0=\mathrm{Id}\). The skew product
\(
S(\omega,x)=(\sigma\omega,f_\omega(x))
\)
satisfies \(S^n(\omega,x)=(\sigma^n\omega,f_\omega^n(x))\).

\medskip

\begin{defn}
A family of probability measures \(\{\mu_\omega\}_{\omega\in\Omega}\) on \(X\) is equivariant if
\(
(f_\omega)_*\mu_\omega=\mu_{\sigma\omega}
\quad \text{for }\mathbb P\text{-a.e. }\omega.
\)
\end{defn}

\medskip

We study statistical properties of the random fibre systems \((f_\omega,\mu_\omega)\), in particular decay of correlations.

\begin{defn}
For \(\varphi,\psi:\Omega\times X\to\mathbb R\), write
\[
\varphi_\omega(x)=\varphi(\omega,x),
\qquad
\psi_\omega(x)=\psi(\omega,x).
\]
For \(n\ge1\), define the future correlation
\[
\mathrm{Cor}^{(F)}_{n,\omega}(\varphi,\psi,X)
=
\int_X (\varphi_{\sigma^n\omega}\circ f_\omega^n)\psi_\omega\,d\mu_\omega
-
\int_X \varphi_{\sigma^n\omega}\,d\mu_{\sigma^n\omega}
\int_X \psi_\omega\,d\mu_\omega,
\]
and the past correlation
\[
\mathrm{Cor}^{(P)}_{n,\omega}(\varphi,\psi,X)
=
\int_X (\varphi_\omega\circ f_{\sigma^{-n}\omega}^n)\psi_{\sigma^{-n}\omega}\,d\mu_{\sigma^{-n}\omega}
-
\int_X \varphi_\omega\,d\mu_\omega
\int_X \psi_{\sigma^{-n}\omega}\,d\mu_{\sigma^{-n}\omega}.
\]
\end{defn}

Deterministic observables on \(X\) are identified with constant fibre families.

\medskip

\subsection*{Notation and conventions}

Throughout, \(\nu\) denotes the parameter law and \(m\) denotes Lebesgue measure.

The fibre space \(X\) is either
\[
X=\mathbb T
\quad \text{(Pikovsky case)},
\quad
or
\quad
X=I=[-1,1]
\quad \text{(Grossmann--Horner case)}.
\] where \(\mathbb T=I/\{-1\sim1\},  I=[-1,1].\)
We write \(I^-=[-1,0)\), \(I^+=(0,1]\).

For Pikovsky maps we write \(f_\omega=g_\omega\), and for Grossmann--Horner maps \(f_\omega=h_\omega\).

\medskip

\noindent
\textbf{Random towers.}
We use \(\Lambda_\omega\) for the base, \(\mathcal P_\omega\) for the base partition,
\(R_\omega\) for the return time, \(\Delta_\omega\) for the tower, and
\[
F_\omega:\Delta_\omega\to\Delta_{\sigma\omega}
\]
for the tower map. The projection \(\pi_\omega:\Delta_\omega\to X\) satisfies
\(
\pi_{\sigma\omega}\circ F_\omega=f_\omega\circ\pi_\omega.
\)
The separation time is denoted by \(s\), with \(s(z,z')=0\) when \(z,z'\) lie in different atoms.

We write \(\bar m_\omega\) for the reference measure on \(\Delta_\omega\),
\(\bar\mu_\omega\) for the equivariant tower measure, and
\[
\mu_\omega=(\pi_\omega)_*\bar\mu_\omega.
\]
In the Pikovsky case, \(\mu_\omega=m\).

Jacobians are taken with respect to \(\bar m_\omega\); in particular
\(JF_\omega^n\) denotes the Jacobian of \(F_\omega^n\), and on fibres
\(Jf_\omega^n=|Df_\omega^n|\).

\medskip

\noindent
Endpoint sequences \(x_n^\pm(\omega)\), \(y_n^\pm(\omega)\),
\(\Delta_n^\pm(\omega)\), \(\delta_n^\pm(\omega)\) are always fibre dependent.
When parameters are frozen, we write \(x_n^\pm(\alpha)\), etc.

We write \(A\lesssim B\) if \(A\le CB\) for a constant independent of \(n,\omega\)
(and parameters when stated). We write \(A\asymp B\) for two sided bounds.
Constants \(C\) may change from line to line; random constants are written \(C_\omega\).

We use the paired sign convention:
\[
1\mp x_n^\pm = 
\begin{cases}
1-x_n^+ & (+\text{ case})\\
1+x_n^- & (-\text{ case})
\end{cases}.
\]

\section{Introduction and Literature Review}\label{sec:intro}

Quenched statistical properties of random dynamical systems have been studied
by several methods. Early results on quenched decay of correlations were obtained
in \cite{Buzzi,BBMD02} using, respectively, functional analytic and inducing
techniques. Since then, quenched limit theorems and mixing estimates have been
proved in a number of random and non autonomous settings. Even for uniformly
hyperbolic systems, random compositions require non trivial modifications of the
deterministic arguments; see, for example, \cite{DFGV20}. Related results have
also been obtained for expanding on average systems
\cite{DH21,DHS21,DFGV20}.

For systems that are not uniformly hyperbolic, transfer operator methods are not
always directly applicable. Random Young towers provide a flexible framework for
studying quenched statistical properties in this setting. This approach has been
used in \cite{BBMD02,BBMD02C,ABR22,ABRV,BahBosRuz19} to obtain quenched decay
of correlations, and in \cite{Haf22,Su2022} to prove quenched limit theorems.
The construction of such towers in concrete examples, however, depends on the
geometry of the maps and on the way randomness enters the endpoint recursions.

We study random compositions of interval maps with two indifferent fixed points
and a singularity at the origin. The first family was introduced by
Pikovsky~\cite{pik91} and further studied in \cite{artcris04}; the second was
introduced by Grossmann and Horner~\cite{gh85}. Their deterministic ergodic and
statistical properties were studied in \cite{gnsp,CoaLuzMub22,MT22}.

These systems differ from the LSV maps in two important ways. First, the inducing
scheme must track excursions between two neutral endpoints through a singular
branch. Second, the singularity gives rise to unbounded derivatives, so the
bounded distortion estimates required for the induced branches do not follow
from the standard LSV construction. A central part of the paper is therefore the
verification of the distortion and return time tail estimates needed for the
random tower construction. For the distortion estimates, we treat the singular
and neutral parts of each excursion separately in distance coordinates. The
one step neutral contributions then telescope along the excursion, giving
uniform distortion bounds for the induced random branches without relying on a
Schwarzian or Koebe argument.

These estimates are not merely a formal verification of the abstract tower
hypotheses. Their relevance is also illustrated by subsequent work on memory
loss for nonstationary intermittent systems with unbounded derivatives; in
particular, Korepanov and Lepp\"anen~\cite{korepanov2024improved} use the
distortion estimates for the Pikovsky and Grossmann--Horner families developed
here.

The probabilistic estimates required for the drift conditions also depend on
the structure of the random family. In the random LSV setting of
\cite{BahBosRuz19}, the endpoint estimates can be reduced to concentration
bounds involving the independent parameter sequence. In the present setting,
the endpoint quantities are generated by the random cocycle and therefore
involve overlapping blocks of the environment. For both the Pikovsky and
Grossmann--Horner families, we bound the raw endpoint arrays from below by
one-coordinate comparison arrays. For each fixed row, these comparison
variables are independent and uniformly bounded, so Hoeffding's inequality
gives the required lower drift estimates with stretched exponential random
cutoffs.

Under the stated assumptions on the parameter law, we obtain equivariant
absolutely continuous sample measures and quenched polynomial decay of both
future and past fibre correlations. The resulting rates are determined by the
return time tail estimates obtained from the endpoint drift bounds.

\subsection{Family of Maps}\label{sec:main-results}

\subsubsection{Pikovsky maps}

Let \(I=[-1,1]\), \(I^-=[-1,0)\), \(I^+=(0,1]\), and
\(
\mathbb{T}=I/\{-1\sim1\}.
\)
We consider the family of maps
\[
g_\alpha:\mathbb{T}\to\mathbb{T},\qquad \alpha>1,
\]
introduced in \cite{pik91}. On the positive half of the interval, \(g_\alpha\)
is defined implicitly by
\begin{equation}\label{eqn_1}
x =
\begin{cases}
\dfrac{1}{2\alpha}(1+g_\alpha(x))^\alpha,
& 0\leq x\leq \dfrac{1}{2\alpha},\\[1.2ex]
g_\alpha(x)+\dfrac{1}{2\alpha}(1-g_\alpha(x))^\alpha,
& \dfrac{1}{2\alpha}\leq x\leq 1.
\end{cases}
\end{equation}
For negative \(x\), we set \(g_\alpha(-x)=-g_\alpha(x)\). Each branch defines a
unique monotone inverse, so \(g_\alpha\) is well defined and continuous on
\(\mathbb{T}\setminus\{0\}\), and \(C^2\) on \(I\setminus\{0\}\) (modulo
\(-1\sim1\)). For \(\alpha=1\), the map reduces to the doubling map.

For every \(\alpha>1\), \(g_\alpha\) preserves Lebesgue measure \(m\)
(see \cite{gnsp}), and in the deterministic setting it has polynomial decay of
correlations of order \(n^{-1/(\alpha-1)}\).

\medskip

Fix parameters
\(
1<\alpha_1<2,\quad \alpha_1<\alpha_2<\alpha_1+1,
\)
and set \(\mathcal A=[\alpha_1,\alpha_2]\).
We take \(X=\mathbb{T}\) and \(f_\alpha=g_\alpha\). Then
\[
\Omega=\mathcal A^{\mathbb Z},\qquad
\mathbb P=\nu^{\mathbb Z},\qquad
g_\omega=g_{\omega_0},
\]
and the cocycle is given by
\(
g_\omega^n
=
g_{\sigma^{n-1}\omega}\circ\cdots\circ g_\omega.
\)
The skew product
\(
S(\omega,x)=(\sigma\omega,g_\omega(x))
\)
preserves \(\mathbb P\times m\).

The parameter restrictions are used in two places: uniform distortion estimates
require bounded variation over \([\alpha_1,\alpha_2]\), while return time tails
require the summability condition
\(
\alpha_1/(\alpha_2-1)>1,
\)
equivalently \(\alpha_2<\alpha_1+1\).

\subsubsection{Grossmann--Horner maps}
\noindent
The second class of maps we consider was introduced in \cite{gh85}; see Figure~\ref{graph_h}. These maps are full branch maps
\(
h:I\to I,
\quad I=[-1,1],
\)
with two surjective branches defined on
\(
I^-=[-1,0),
\qquad
I^+=(0,1].
\)
The map \(h\) is \(C^1\) on \(I\setminus\{0\}\) and \(C^2\) on
\(
I\setminus(\{0\}\cup\{\pm1\}).
\)
It has a singularity at \(0\), indifferent fixed point at \(-1\) and a neutral point at \(1\). More precisely, for parameters
\(
\gamma>1,
\qquad
0<k<1,
\qquad
a,b>0,
\)
the local behaviour is
\begin{equation}\label{eqn_2}
\begin{cases}
h(x)=1-b|x|^k+o(|x|^k),
& \text{if } x\in U_0(\gamma,k), \\[0.5ex]
h(x)=-x+a|x-1|^\gamma+o(|x-1|^\gamma),
& \text{if } x\in U_{1_-}(\gamma,k), \\[0.5ex]
h(x)=x+a|x+1|^\gamma+o(|x+1|^\gamma),
& \text{if } x\in U_{-1_+}(\gamma,k).
\end{cases}
\end{equation}
Here \(U_0(\gamma,k)\) is a neighbourhood of \(0\),
\[
U_{1_-}(\gamma,k)=h(U_0(\gamma,k)\cap(-1,0))
\qquad
U_{-1_+}(\gamma,k)=h(U_0(\gamma,k)\cap(0,1))
\]
are a left neighbourhood of \(1\), and a right neighbourhood of \(-1\) respectively.

For \(i=1,2\), the corresponding derivative estimates are
\begin{equation}\label{eqn_3}
\begin{cases}
|D^i h(x)|
=
C|x|^{k-i}+o(|x|^{k-i}),
& \text{if } x\in U_0(\gamma,k), \\[0.5ex]
D^i h(x)
=
i-2+C|x-1|^{\gamma-i}+o(|x-1|^{\gamma-i}),
& \text{if } x\in U_{1_-}(\gamma,k), \\[0.5ex]
D^i h(x)
=
2-i+C|x+1|^{\gamma-i}+o(|x+1|^{\gamma-i}),
& \text{if } x\in U_{-1_+}(\gamma,k).
\end{cases}
\end{equation}
The constant \(C\) may depend on the parameters \(\gamma\) and \(k\). We assume throughout that the error terms in \eqref{eqn_2} and \eqref{eqn_3} are uniformly controlled over the parameter range under consideration. These asymptotics are uniform in the parameter range. Consequently, near the
singular point \(0\),
\(
\left|\frac{h''(x)}{h'(x)}\right|\lesssim |x|^{-1},
\)
while near the indifferent endpoints,
\(
\left|\frac{h''(x)}{h'(x)}\right|
\lesssim |1-x|^{\gamma-2}
\quad\text{or}\quad
|1+x|^{\gamma-2},
\)
with constants uniform over the family. These are the distortion kernels used below.
\noindent
In addition, each map in the family is assumed to satisfy the following structural conditions:
\begin{enumerate}
    \item[(i)] \(|Dh(x)|>1\) for all \(x\in I\setminus\{\pm1,0\}\);
    \item[(ii)] \(h\) is strictly increasing on \(I^-\), strictly decreasing on \(I^+\), and convex on each of the intervals \((-1,0)\) and \((0,1)\);
    \item[(iii)] the two branches of \(h\) are onto \(I\).
\end{enumerate}

\noindent
For
\(
0<k<\min\left\{\frac{1}{\gamma-1},1\right\},
\)
the deterministic map \(h\) admits an absolutely continuous invariant probability measure and has polynomial decay of correlations with rate
\(
n^{-\frac{1-k(\gamma-1)}{k(\gamma-1)}}
\)
\cite{gnsp}.

\medskip

\noindent
In the general random notation introduced above, the parameter space for this family is
\[
\mathcal A
=
[\gamma_1,\gamma_2]\times[k_1,k_2]\times[a_1,a_2]\times[b_1,b_2],
\]
and for
\(
\omega_0=(\gamma(\omega),k(\omega),a(\omega),b(\omega))\in\mathcal A
\)
we write
\(
h_\omega:=h_{\omega_0}.
\)
The associated random cocycle is denoted by
\(
h_\omega^n
=
h_{\sigma^{n-1}\omega}\circ\cdots\circ h_{\sigma\omega}\circ h_\omega\),
\(n\geq1.
\)
The corresponding skew product is denoted by
\[
G(\omega,x)=(\sigma\omega,h_\omega(x)).
\]

\noindent
Unlike the Pikovsky family, the maps \(h_\omega\) do not preserve Lebesgue measure in general. Hence the random Grossmann--Horner theorem will be stated with respect to an equivariant family of absolutely continuous probability measures
\(
\{\mu_\omega\}_{\omega\in\Omega}\),
\( (h_\omega)_*\mu_\omega=\mu_{\sigma\omega}.
\)

The parameter restrictions fixed above include
\(
k_2(\gamma_2-1)<1.
\)
This condition is used to obtain the uniform finiteness of the random towers. The
quenched tail estimate, and hence the correlation rate, is governed by
\(
\zeta := k_2(\gamma_1-1).
\)

Thus \(1/\zeta\) is the return time tail exponent appearing in the correlation
bounds below.

\subsection{Statement of results}

\subsubsection{General setting}
The main results below are formulated in terms of general lower drift
conditions for the random endpoint recursions. These conditions are separated
from the concrete parameter laws because the tower tail estimates depend only
on the quenched drift along the random orbit. The probabilistic structure
underlying these estimates is different in the Pikovsky and Grossmann--Horner
cases, and this difference is reflected in the form of the corresponding
triangular arrays. 

\medskip

\noindent
We first state the assumption for random Pikovsky maps. For \(1\leq k\leq n\), define
\begin{align}\label{def_A_nk}
\begin{split}
A_{n,k}(\omega)
&\coloneqq
(\alpha_1-1)
\left[
\frac{1}{2\alpha(\sigma^{n-k}\omega)}
+
m_k(\sigma^{n-k}\omega)
\right]
\left[
\frac{c_k(\alpha_1)}
{k^{1/(\alpha_1-1)}}
\right]^{\alpha(\sigma^{n-k}\omega)-\alpha_1}
\\
&\quad
-\frac{\alpha_1(\alpha_1-1)}{2}
\left[
\frac{1}{2\alpha(\sigma^{n-k}\omega)}
+
m_k(\sigma^{n-k}\omega)
\right]^2
\left[
\frac{c_k(\alpha_2)}
{k^{1/(\alpha_2-1)}}
\right]^{2\alpha(\sigma^{n-k}\omega)-\alpha_1-1}.
\end{split}
\end{align}
Here \(c_k(\alpha_1)\) and \(c_k(\alpha_2)\) are the deterministic endpoint constants
associated with the maps \(g_{\alpha_1}\) and \(g_{\alpha_2}\), respectively. The term
\(m_k(\sigma^{n-k}\omega)\) is defined in~\eqref{m_k}. It is written as a function of
\(\sigma^{n-k}\omega\) since it depends on the corresponding random endpoint sequence and is
measurable with respect to the finite block \(\sigma(\omega_{n-k},\ldots,\omega_n)\).

\begin{description}
\item[\namedlabel{itm:A1}{\textbf{(A)}}]
There exist constants \(q=q(\nu)\geq 0\), \(c_A>0\), \(C_A>0\), \(u_A>0\), and
\(0<v_A<1\), together with a random variable
\(
N_A:\Omega\to\mathbb N,
\)
such that
\(
\mathbb P\{N_A>n\}\leq C_A e^{-u_A n^{v_A}}
\)
for every \(n\in\mathbb N\), and for \(\mathbb P\)-almost every \(\omega\in\Omega\),
\begin{equation}\label{assumpA1}
\frac{(\log n)^q}{n}
\sum_{k=1}^n A_{n,k}(\omega)
\geq c_A
\end{equation}
for every \(n\geq N_A(\omega)\).
\end{description}

\begin{thm}\label{thm2}
Suppose that the random Pikovsky maps satisfy Assumption~\ref{itm:A1}. Then,
for every \(\delta>0\), there exist a full measure set
\(\Omega_0\subset\Omega\) and a random variable
\(
C_\omega:\Omega_0\to\mathbb R_+
\)
such that the following holds. For every fibre observable
\(
\varphi=(\varphi_\omega)_{\omega\in\Omega}\),
\( \psi=(\psi_\omega)_{\omega\in\Omega},
\)
with
\[
\sup_{\omega\in\Omega}\|\varphi_\omega\|_{L^\infty(\mathbb T)}<\infty,
\qquad
\sup_{\omega\in\Omega}\|\psi_\omega\|_{\mathcal C^\eta(\mathbb T)}<\infty,
\]
there exists \(C_{\varphi,\psi}>0\), independent of \(\omega\) and \(n\), such that for every
\(\omega\in\Omega_0\) and every \(n\geq1\),

\begin{equation}\label{F-Cor1}
\left|\int_{\mathbb T} (\varphi_{\sigma^n\omega}\circ g_\omega^n)\psi_\omega\,dm
-
\int_{\mathbb T} \varphi_{\sigma^n\omega}dm
\int_{\mathbb T} \psi_\omega dm
    \right|
\leq
C_\omega C_{\varphi,\psi}
n^{-\left(\frac{1}{\alpha_1-1}-1-\delta\right)};
\end{equation}

\begin{equation}\label{P-Cor1}
\left|\int_{\mathbb T} (\varphi_\omega\circ g_{\sigma^{-n}\omega}^n)\psi_{\sigma^{-n}\omega}\,dm
-
\int_{\mathbb T} \varphi_\omega\,dm
\int_{\mathbb T} \psi_{\sigma^{-n}\omega}\,dm
\right|
\leq
C_\omega C_{\varphi,\psi}
n^{-\left(\frac{1}{\alpha_1-1}-1-\delta\right)}.
\end{equation}

Moreover, there exist constants \(C>0\), \(u'>0\), and \(0<v'<1\) such that
\(
\mathbb P\{C_\omega>n\}\leq Ce^{-u'n^{v'}}
\)
for every \(n\in\mathbb N\).
\end{thm}
\noindent
In particular, deterministic observables \(\varphi\in L^\infty(\mathbb{T})\) and
\(\psi\in\mathcal C^\eta(\mathbb{T})\) are included by identifying them with constant fibre families,
\(
\varphi_\omega\equiv\varphi\),
\( \psi_\omega\equiv\psi.
\)

The formulation of the Pikovsky result is simpler than the corresponding Grossmann--Horner result because each map \(g_\omega\) preserves Lebesgue measure \(m\). Hence the equivariant family of sample measures is simply \(\mu_\omega=m\) for all \(\omega\in\Omega\).

\medskip

\noindent
We now state the analogous assumption for the random Grossmann--Horner maps. For
\(1\leq k\leq n\), define
\begin{align}\label{def_B_nk}
\begin{split}
B_{n,k}(\omega)
&\coloneqq
\left[
a(\sigma^{n-k}\omega)
+
u_{\gamma(\sigma^{n-k}\omega)}
\bigl(x_k^-(\sigma^{n-k}\omega)\bigr)
\right]
\left[
\frac{c_k(\gamma_1)}
{k^{1/(\gamma_1-1)}}
\right]^{\gamma(\sigma^{n-k}\omega)-\gamma_1}
\\
&\quad
-\frac{\gamma_1-1}{2}
\left[
a(\sigma^{n-k}\omega)
+
u_{\gamma(\sigma^{n-k}\omega)}
\bigl(x_k^-(\sigma^{n-k}\omega)\bigr)
\right]^2
\left[
\frac{c_k(\gamma_2)}
{k^{1/(\gamma_2-1)}}
\right]^{2\gamma(\sigma^{n-k}\omega)-\gamma_1-1}.
\end{split}
\end{align}
The endpoint \(x_k^-(\sigma^{n-k}\omega)\) is the one appearing in the negative endpoint
recursion for the Grossmann--Horner family. If the positive endpoint recursion is used
instead, the definition of \(B_{n,k}\) is replaced by the corresponding expression
obtained from that recursion.

\begin{description}
\item[\namedlabel{itm:B1}{\textbf{(B)}}]
There exist constants \(q=q(\nu)\geq 0\), \(c_B>0\), \(C_B>0\), \(u_B>0\), and
\(0<v_B<1\), together with a random variable
\(
N_B:\Omega\to\mathbb N,
\)
such that
\(
\mathbb P\{N_B>n\}\leq C_B e^{-u_B n^{v_B}}
\)
for every \(n\in\mathbb N\), and for \(\mathbb P\)-almost every \(\omega\in\Omega\),
\begin{equation}\label{assumpB1}
\frac{(\log n)^q}{n}
\sum_{k=1}^n B_{n,k}(\omega)
\geq c_B
\end{equation}
for every \(n\geq N_B(\omega)\).
\end{description}

\begin{thm}\label{thm3}
Suppose that the random Grossmann--Horner maps satisfy Assumption~\ref{itm:B1}.
Then there exists a measurable family of absolutely continuous probability measures
\(
\{\mu_\omega\}_{\omega\in\Omega}
\)
on \(I\) such that
\(
(h_\omega)_*\mu_\omega=\mu_{\sigma\omega}
\)
for \(\mathbb P\)-almost every \(\omega\in\Omega\).

Consequently, the measure \(\boldsymbol{\mu}\) on \(\Omega\times I\) defined by
\[
\boldsymbol{\mu}(A)
=
\int_\Omega \mu_\omega(A_\omega)\,d\mathbb P(\omega),
\qquad
A_\omega=\{x\in I:(\omega,x)\in A\},
\]
is invariant under the skew product
\(
G(\omega,x)=(\sigma\omega,h_\omega(x)).
\)

Moreover, for every \(\delta>0\), there exist a full measure set
\(\Omega_0\subset\Omega\) and a random variable
\(
C_\omega:\Omega_0\to\mathbb R_+
\)
such that the following holds. For every fibre observable
\(
\varphi=(\varphi_\omega)_{\omega\in\Omega},
\psi=(\psi_\omega)_{\omega\in\Omega},
\)
with
\(
\sup_{\omega\in\Omega}\|\varphi_\omega\|_{L^\infty(I)}<\infty,
\sup_{\omega\in\Omega}\|\psi_\omega\|_{\mathcal C^\eta(I)}<\infty,
\)
there exists a constant \(C_{\varphi,\psi}>0\), independent of \(n\) and \(\omega\),
such that for every \(\omega\in\Omega_0\) and every \(n\geq1\),

    \begin{equation}\label{F-Cor2}
    \left|\int_I (\varphi_{\sigma^n\omega}\circ h_\omega^n)\psi_\omega\,d\mu_\omega
-
\int_I \varphi_{\sigma^n\omega}\,d\mu_{\sigma^n\omega}
\int_I \psi_\omega\,d\mu_\omega
    \right|
     \leq
    C_\omega C_{\varphi,\psi}
    n^{-\left(\frac{1}{\zeta}-1-\delta\right)};
    \end{equation}

    \begin{equation}\label{P-Cor2}
   \left|\int_I (\varphi_\omega\circ h_{\sigma^{-n}\omega}^n)\psi_{\sigma^{-n}\omega}\,d\mu_{\sigma^{-n}\omega}
-
\int_I \varphi_\omega\,d\mu_\omega
\int_I \psi_{\sigma^{-n}\omega}\,d\mu_{\sigma^{-n}\omega}
\right|
    \leq
     C_\omega C_{\varphi,\psi}
     n^{-\left(\frac{1}{\zeta}-1-\delta\right)} .
    \end{equation}

Moreover, there exist constants \(C>0\), \(u'>0\), and \(0<v'<1\) such that
\(
\mathbb P\{C_\omega>n\}\leq C e^{-u'n^{v'}}
\)
for every \(n\in\mathbb N\).
\end{thm}
\medskip

\noindent
In both cases, the exponent \((\cdot-1)\) in the correlation rate reflects the summation over return-time tails in the inducing scheme. For the Grossmann--Horner system, the quantity \(1/\zeta\) corresponds to the tail exponent of the return time function.

\noindent

\subsubsection{Natural Parameter Law}
The above drift assumptions are the main probabilistic input in the tower
construction. We verify them in Section~\ref{sec:DDUD} for the concrete choices of
parameter law below. Since the endpoint arrays generated by the random cocycle are not independent,
the verification uses lower bounds by one coordinate comparison arrays. These
comparison arrays are independent for each fixed row and satisfy the required
concentration estimates, resulting in the quenched lower drift bounds with
stretched exponential random cutoffs appearing in Assumptions~\ref{itm:A1}
and~\ref{itm:B1}.

For Pikovsky maps: Let \(\beta_1<\beta_2\).

\begin{description}
  \item[\namedlabel{itm:DD}{DD}]
  We say that \(\nu\) has a \emph{two point distribution} on \([\beta_1,\beta_2]\) if
  \[
  \nu=p_1\delta_{\beta_1}+p_2\delta_{\beta_2},
  \qquad
  p_1,p_2>0,\quad p_1+p_2=1.
  \]

  \item[\namedlabel{itm:UD}{UD}]
  We say that \(\nu\) has the \emph{uniform distribution} on \([\beta_1,\beta_2]\) if
  \[
  d\nu(t)=\frac{1}{\beta_2-\beta_1}\,dt,
  \qquad t\in[\beta_1,\beta_2].
  \]
\end{description}

We first verify the drift condition for the discrete and uniform Pikovsky parameter laws.
\begin{prop}
\label{prop:DDUD-imply-A}
Suppose that the Pikovsky parameter law is either
\[
\nu=p_1\delta_{\alpha_1}+p_2\delta_{\alpha_2},
\qquad p_1,p_2>0,\quad p_1+p_2=1,
\]
or the normalized Lebesgue measure on \([\alpha_1,\alpha_2]\). Then
Assumption~\ref{itm:A1} holds. More precisely, \(q=0\) in the discrete case and
\(q=1\) in the uniform case.
\end{prop}

\begin{cor}\label{thm1}
Suppose that the parameter law \(\nu\) satisfies
\ref{itm:DD} or \ref{itm:UD}. Then, for every \(\delta>0\), the conclusions of
Theorem~\ref{thm2} hold. In particular, there exist a full measure set
\(\Omega_0\subset\Omega\) and a random variable
\(
C_\omega:\Omega_0\to\mathbb R_+
\)
such that the future and past correlation bounds
\eqref{F-Cor1} and \eqref{P-Cor1} hold for every
\(\omega\in\Omega_0\) and every \(n\geq1\).

Moreover, there exist constants \(C,u'>0\) and \(0<v'<1\) such that
\(
\mathbb P\{C_\omega>n\}\leq Ce^{-u'n^{v'}}
\)
for every \(n\in\mathbb N\).
\end{cor}

Since
\[
\mathcal A=[\gamma_1,\gamma_2]\times[k_1,k_2]\times[a_1,a_2]\times[b_1,b_2]
\]
for the Grossmann--Horner family, we use the following parameter laws on
\(\mathcal A\):

\begin{description}
 \item[\namedlabel{itm:GHDD}{GH-DD}]
 \[
 \nu=p_1\delta_{\theta^{(1)}}+p_2\delta_{\theta^{(2)}},
 \qquad p_1,p_2>0,\quad p_1+p_2=1,
 \]
 where
 \[
 \theta^{(i)}=(\gamma^{(i)},k^{(i)},a^{(i)},b^{(i)})\in\mathcal A,
 \]
 and at least one atom satisfies \(\gamma^{(i)}=\gamma_1\).

 \item[\namedlabel{itm:GHUD}{GH-UD}]
 \(\nu\) is the normalized Lebesgue measure on \(\mathcal A\), namely
 \[
 d\nu(\gamma,k,a,b)
 =
 \frac{d\gamma\,dk\,da\,db}
 {(\gamma_2-\gamma_1)(k_2-k_1)(a_2-a_1)(b_2-b_1)}.
 \]
\end{description}
We next verify the corresponding drift condition for the discrete and uniform
Grossmann--Horner parameter laws.
\begin{prop}
\label{prop:GHDDUD-imply-B}
Suppose that the Grossmann--Horner parameter law is either ~\ref{itm:GHDD} or ~\ref{itm:GHUD} on
\[
[\gamma_1,\gamma_2]\times[k_1,k_2]\times[a_1,a_2]\times[b_1,b_2].
\]
Then Assumption~\ref{itm:B1} holds. More precisely, \(q=0\) in the discrete case and
\(q=1\) in the uniform case.
\end{prop}

\begin{cor}\label{thm:GH}
Suppose that the parameter law \(\nu\) satisfies
\ref{itm:GHDD} or \ref{itm:GHUD}. Then, for every \(\delta>0\), the conclusions
of Theorem~\ref{thm3} hold. In particular, there exists a measurable family
of absolutely continuous probability measures
\(
\{\mu_\omega\}_{\omega\in\Omega}
\)
on \(I\) such that
\(
(h_\omega)_*\mu_\omega=\mu_{\sigma\omega}
\)
for \(\mathbb P\)-almost every \(\omega\in\Omega\). Moreover, the measure
\(
d\boldsymbol{\mu}(\omega,x)=d\mu_\omega(x)\,d\mathbb P(\omega)
\)
is \(G\)-invariant, and there exist a full measure set
\(\Omega_0\subset\Omega\) and a random variable
\(
C_\omega:\Omega_0\to\mathbb R_+
\)
such that the future and past correlation bounds
\eqref{F-Cor2} and \eqref{P-Cor2} hold for every
\(\omega\in\Omega_0\) and every \(n\geq1\).

Moreover, there exist constants \(C,u'>0\) and \(0<v'<1\) such that
\(
\mathbb P\{C_\omega>n\}\leq Ce^{-u'n^{v'}}
\)
for every \(n\in\mathbb N\).
\end{cor}

\section{Overview of the Proof}\label{sec:overview}

The proof of the main results is based on the construction of random
Young towers. The two families of maps considered in this paper have
different geometric features, but the inducing procedure follows the
same general principle. In both cases one constructs, for
$\mathbb P$-almost every $\omega\in\Omega$, a random inducing scheme
over a suitable base $\Lambda_\omega$.

Let $f_\omega$ denote either the random Pikovsky map $g_\omega$ or the
random Grossmann--Horner map $h_\omega$. The inducing scheme consists of
a partition $\mathcal P_\omega$ of $\Lambda_\omega$ and a return time
function
\(
R_\omega:\Lambda_\omega\to\mathbb N
\)
such that the induced map
\[
f_\omega^{R_\omega}:\Lambda_\omega\to
\Lambda_{\sigma^{R_\omega}\omega}
\]
is a full branch expanding map on every element of
$\mathcal P_\omega$.

The associated random tower is
\[
\Delta_\omega
=
\{(x,\ell):x\in\Lambda_{\sigma^{-\ell}\omega},
\ 0\leq \ell<R_{\sigma^{-\ell}\omega}(x)\},
\]
with tower map
\(
F_\omega:\Delta_\omega\to\Delta_{\sigma\omega}.
\)
The tower projection
\(
\pi_\omega:\Delta_\omega\to X
\)
satisfies the semiconjugacy relation
\(
\pi_{\sigma\omega}\circ F_\omega
=
f_\omega\circ\pi_\omega
\) where \(X\) denotes the corresponding phase space.

The difference between the two families appears in the construction of
the inducing bases and in the estimates for the return time function.
For the Pikovsky family, the inducing scheme is determined by the
excursions between the two sides of the singular point on
$\mathbb T=I/\{-1\sim1\}$. For the Grossmann--Horner family, the
excursions occur between the singular branch at the origin and the two
neutral fixed points at the endpoints of $[-1,1]$. In both cases, the
main estimates reduce to controlling the random endpoint recursions
generated by the cocycle.

We use the random tower framework of  \cite{BahBosRuz19}. Let
$\bar m_\omega$ denote the reference measure on $\Delta_\omega$.
The tower satisfies the following properties.

\begin{enumerate}
\item[(P1)] \textbf{Markov property.}
For every $\Lambda_j(\omega)\in\mathcal P_\omega$,
\[
F_\omega^{R_\omega}|_{\Lambda_j(\omega)}
:
\Lambda_j(\omega)\to
\Lambda_{\sigma^{R_\omega}\omega}
\]
is a bijection modulo zero.

\item[(P2)] \textbf{Bounded distortion.}
There exist constants $D>0$ and $0<\rho<1$ such that for all
$x,y\in\Lambda_j(\omega)$,
\[
\left|
\frac{JF_\omega^{R_\omega}(x)}
     {JF_\omega^{R_\omega}(y)}
-1
\right|
\leq
D\rho^{s(F_\omega^{R_\omega}x,F_\omega^{R_\omega}y)} .
\]

\item[(P3)] \textbf{Generating property.}
The partition $\mathcal P_\omega$ is generating for the induced system.

\item[(P4)] \textbf{Quenched return time tails.}
There exist constants $C>0$, $a>1$, $b\geq0$, $u,v>0$ and a
random variable $n_1(\omega)$ with stretched exponential tails such
that
\[
\bar m_\omega\{R_\omega>n\}
\leq
C\frac{(\log n)^b}{n^a}
\]
for all $n\geq n_1(\omega)$.

\item[(P5)] \textbf{Aperiodicity.}
There exist return times
$t_1,\dots,t_N$ with
\[
\gcd(t_1,\dots,t_N)=1
\]
and constants $\epsilon_i>0$ such that
\(
\bar m_\omega\{R_\omega=t_i\}>\epsilon_i
\)
for $\mathbb P$-almost every $\omega$.

\item[(P6)] \textbf{Finiteness.}
There exists $M>0$ such that
\(
\bar m_\omega(\Delta_\omega)\leq M
\)
for $\mathbb P$-almost every $\omega$.

\item[(P7)] \textbf{Annealed return time tails.}
There exist constants $C>0$, $\hat b\geq0$ and $a>1$ such that
\[
\int_\Omega
\bar m_\omega\{R_\omega=n\}\,d\mathbb P(\omega)
\leq
C\frac{(\log n)^{\hat b}}{n^{a+1}} .
\]
\end{enumerate}

The geometric properties \((P1)\), \((P3)\), and \((P5)\) follow from
the explicit structure of the induced branches. To verify \((P2)\), we
decompose each return branch into its singular and neutral components,
together with a uniformly controlled transition component when it is
present. In distance coordinates, the singular contribution is
controlled by the initial logarithmic relative width, while the
one step neutral contributions telescope along the excursion. This
gives a Euclidean distortion estimate in terms of the length of the
final image interval. Uniform expansion of the induced branches then
converts this estimate into the separation time bound required in
\((P2)\).

The return time estimates are obtained separately. The endpoint drift assumptions give
the quenched endpoint bounds and return time tails needed for \((P4)\) and \((P7)\).
The uniform finiteness condition \((P6)\) is proved independently, using deterministic
worst parameter summability estimates.

Although the endpoint recursions play the same role in both families,
their raw triangular arrays contain endpoint terms generated by the
cocycle and are not independent in the summation variable. In
Section~\ref{sec:DDUD}, we verify the drift assumptions for the
discrete and uniform laws by bounding the raw arrays from below by
one coordinate comparison arrays. For each fixed \(n\), the comparison
variables depend only on the distinct coordinates
\(\omega_{n-k}\), and hence are independent and uniformly bounded.
Hoeffding's inequality gives exponentially small probabilities for
the corresponding row wise lower drift failures. Summing these
probabilities yields random cutoffs with stretched exponential tails.

The proof is organized according to the following logical structure. First, we
prove the quenched mixing results under the abstract endpoint drift assumptions
\ref{itm:A1} and \ref{itm:B1}. These assumptions provide the lower drift
estimates needed for the endpoint recursions and imply the required tower tail
bounds. The corresponding conditional results are stated in
Theorems~\ref{thm2} and~\ref{thm3}.

The concrete parameter laws are treated afterwards. In Section~\ref{sec:DDUD},
we verify that
\[
\ref{itm:DD}/\ref{itm:UD}\Longrightarrow\ref{itm:A1},
\qquad
\ref{itm:GHDD}/\ref{itm:GHUD}\Longrightarrow\ref{itm:B1}.
\]
Combining these verifications with the conditional theorems gives the stated
discrete and uniform distribution results.

At the level of the individual families, the verification is carried out as
follows. Sections~\ref{sec:pikovsky} and
\ref{secthm3} construct the corresponding inducing schemes and verify the
geometric tower properties for Pikovsky and Grossmann--Horner maps respectively. Under Assumptions~\ref{itm:A1} and
\ref{itm:B1}, the endpoint estimates and return time tail bounds needed for
(P4), (P6), and (P7) are established in these sections. The quenched decay of
correlations in Theorems~\ref{thm2} and~\ref{thm3} then follows from the random
tower results of \cite{BahBosRuz19}. Finally, Section~\ref{sec:DDUD} verifies
the drift assumptions for the discrete and uniform parameter laws.

\section{Random Pikovsky Maps}\label{sec:pikovsky}
In this section we verify the tower properties for the random Pikovsky
family introduced in Section~\ref{sec:overview}. The main ingredients are
the explicit construction of the inducing scheme and the estimates on the
random endpoint sequences. These estimates imply bounded distortion (P2),
quenched return time tails (P4), finiteness (P6), and annealed return time
asymptotics (P7).
\subsection{Inducing construction}\label{topcons1}

For \(n\geq0\), define the random endpoint sequences by
\(
x_0^+(\omega)=x_0^-(\omega)=0,
\)
and
\[
x_{n+1}^+(\omega)
=
(g_\omega|_{I^+})^{-1}
\bigl(x_n^+(\sigma\omega)\bigr),
\qquad
x_{n+1}^-(\omega)
=
(g_\omega|_{I^-})^{-1}
\bigl(x_n^-(\sigma\omega)\bigr).
\]
Define the corresponding intervals
\[
\Delta_n^+(\omega)
=
(x_n^+(\omega),x_{n+1}^+(\omega)),
\qquad
\Delta_n^-(\omega)
=
(x_{n+1}^-(\omega),x_n^-(\omega)).
\]
Then \(\{\Delta_n^+(\omega)\}_{n\geq0}\) and
\(\{\Delta_n^-(\omega)\}_{n\geq0}\) are mod \(0\) partitions of
\(I^+\) and \(I^-\), respectively.

For \(n\geq1\), let
\[
\delta_n^-(\omega)
=
g_\omega^{-1}
\bigl(\Delta_{n-1}^+(\sigma\omega)\bigr)
\cap
\Delta_0^-(\omega),
\quand
\delta_n^+(\omega)
=
g_\omega^{-1}
\bigl(\Delta_{n-1}^-(\sigma\omega)\bigr)
\cap
\Delta_0^+(\omega).
\]
These form mod \(0\) partitions of
\(\Delta_0^-(\omega)\) and \(\Delta_0^+(\omega)\), respectively. Moreover,
\[
g_\omega^n:
\delta_n^-(\omega)\to
\Delta_0^+(\sigma^n\omega),
\qquad
g_\omega^n:
\delta_n^+(\omega)\to
\Delta_0^-(\sigma^n\omega)
\]
are \(C^2\) diffeomorphisms.

We choose the inducing base
\(
\Lambda_\omega=\Delta_0^-(\omega).
\)
For \(i,j\geq1\), define the return partition elements
\[
\delta_{ij}^-(\omega)
=
\delta_i^-(\omega)
\cap
g_\omega^{-i}
\bigl(\delta_j^+(\sigma^i\omega)\bigr).
\]
Then
\(
\mathcal P_\omega
=
\{\delta_{ij}^-(\omega):i,j\geq1\}
\)
is a mod \(0\) partition of \(\Lambda_\omega\), and each branch
\[
g_\omega^{i+j}:
\delta_{ij}^-(\omega)
\longrightarrow
\Lambda_{\sigma^{i+j}\omega}
\]
is a full-branch \(C^2\) diffeomorphism. We define the return time by
\(
R_\omega|_{\delta_{ij}^-(\omega)}=i+j.
\)
The induced map is therefore
\[
\widehat g_\omega
=
g_\omega^{R_\omega}:
\Lambda_\omega
\longrightarrow
\Lambda_{\sigma^{R_\omega}\omega}.
\]

The corresponding tower projection is given by
\(
\pi_\omega(x,\ell)
=
g_{\sigma^{-\ell}\omega}^{\ell}(x),
\)
and satisfies
\(
\pi_{\sigma\omega}\circ F_\omega
=
g_\omega\circ\pi_\omega.
\)

\begin{prop}\label{prop:pikovsky-basic}
The random Pikovsky tower satisfies properties (P1), (P3), and (P5).
\end{prop}

\begin{proof}
By construction,
\[
g_\omega^{R_\omega}|_{\delta_{ij}^-(\omega)}
:
\delta_{ij}^-(\omega)
\to
\Lambda_{\sigma^{R_\omega}\omega}
\]
is a bijection modulo zero, which gives (P1).

The generating property (P3) follows from the uniform expansion of the
induced branches. Indeed, every element of $\mathcal P_\omega$ is mapped
diffeomorphically onto the base and the inverse branches contract
uniformly.

Finally, the return times $2$ and $3$ occur through the elements
$\delta^-_{11}(\omega)$ and $\delta^-_{12}(\omega)$, respectively. By
continuity of the endpoint maps and compactness of the parameter range,
their Lebesgue measures are uniformly bounded below. Since
$\gcd(2,3)=1$, property (P5) follows.
\end{proof}

\subsection{Endpoint estimates}\label{endpoint-estimates-pik}
We verify (P2), (P4), (P6), and (P7) using estimates on the endpoint
sequences. We first introduce the notation
\[
\Delta_n^+(\omega)=(x_n^+(\omega),x_{n+1}^+(\omega)),
\qquad
\Delta_n^-(\omega)=(x_{n+1}^-(\omega),x_n^-(\omega)).
\]
For \(n\geq0\), define
\[
y_n^-(\omega)
=
(g_\omega|_{\Delta_0^-(\omega)})^{-1}
(x_n^+(\sigma\omega)),
\qquad
y_n^+(\omega)
=
(g_\omega|_{\Delta_0^+(\omega)})^{-1}
(x_n^-(\sigma\omega)).
\]
Then
\[
\delta_n^-(\omega)=(y_{n-1}^-(\omega),y_n^-(\omega)),
\qquad
\delta_n^+(\omega)=(y_n^+(\omega),y_{n-1}^+(\omega)).
\]

If \(\omega=(\alpha,\alpha,\ldots)\), these sequences coincide with the
deterministic endpoints of \(g_\alpha\). We use the following estimates from
\cite{gnsp}.

\begin{lem}[Lemma 1 of \cite{gnsp}]\label{lem:eqn3}
For the deterministic map \(g_\alpha\),
\(
x_{n+1}^{\pm}
=
x_n^\pm
\pm
\frac{1}{2\alpha}(1\mp x_n^\pm)^\alpha ,
\)
and
\[
1-x_n^+
\sim
\left(\frac{2\alpha}{\alpha-1}\right)^{-1/(\alpha-1)}
n^{-1/(\alpha-1)},
\quand
x_n^-+1
\sim
\left(\frac{2\alpha}{\alpha-1}\right)^{-1/(\alpha-1)}
n^{-1/(\alpha-1)} .
\]
Moreover,
\[
m(\Delta_n^\pm)
\sim
\frac{1}{2\alpha}
\left(\frac{2\alpha}{\alpha-1}\right)^{-\alpha/(\alpha-1)}
n^{-\alpha/(\alpha-1)},
\quand
|y_n^\pm|
\sim
\left(\frac{2\alpha}{\alpha-1}\right)^{-\alpha/(\alpha-1)}
n^{-\alpha/(\alpha-1)} .
\]
\end{lem}

Near \(1^-\), \(g_\alpha(x)=1-(1-x)-(1-x)^\alpha
\left(\frac1{2\alpha}+u_\alpha(x)\right)\)
\begin{align}\label{lem:eqn3'}
\begin{split}
g_\alpha'(x)
=
1+(1-x)^{\alpha-1}
\left(\frac12+v_\alpha(x)\right), \quand
g_\alpha''(x)
=
(1-x)^{\alpha-2}
\left(-\frac{\alpha-1}{2}+w_\alpha(x)\right),
\end{split}
\end{align}
where
\(
u_\alpha(x),v_\alpha(x),w_\alpha(x)\to0
\quad (x\to1^-).
\)
Moreover,
\[
u_\alpha(x)
=
\frac1{2\alpha}
\left[
\left(\frac{1-g_\alpha(x)}{1-x}\right)^\alpha-1
\right],
\]
and hence
\[
\sup_{\alpha\in[\alpha_1,\alpha_2]}
\sup_{x\in[1/2,1]}
(|u_\alpha(x)|+|v_\alpha(x)|+|w_\alpha(x)|)
\leq M .
\]
The estimates near \(-1\) follow by symmetry.

For later use, write
\(
m_k(\omega)
=
u_{\alpha(\omega)}(x_k^+(\omega)).
\)
Thus
\begin{equation}   
\label{m_k}
   m_k(\sigma^{n-k}\omega)
=
u_{\alpha(\sigma^{n-k}\omega)}
(x_k^+(\sigma^{n-k}\omega)). 
\end{equation}

By monotonicity of the inverse branches,
\begin{equation}\label{ineq_1}
1\mp x_n^\pm(\alpha_1)
\leq
1\mp x_n^\pm(\omega)
\leq
1\mp x_n^\pm(\alpha_2).
\end{equation}

\begin{lem}\label{lem:y-delta-prelim}
For \(n\geq1\),
\[
|y_n^\mp(\omega)|
\asymp
(1\mp x_n^\pm(\sigma\omega))^{\alpha(\omega)},
\]
and
\[
m(\delta_n^\mp(\omega))
\asymp
(1\mp x_{n-1}^\pm(\sigma\omega))^{\alpha(\omega)}
-
(1\mp x_n^\pm(\sigma\omega))^{\alpha(\omega)} .
\]
Consequently,
\[
|y_n^\mp(\omega)|
\lesssim
(1\mp x_n^\pm(\alpha_2))^{\alpha(\omega)},
\]
and
\[
m(\delta_n^\mp(\omega))
\lesssim
(1\mp x_{n-1}^\pm(\alpha_2))^{\alpha(\omega)}
-
(1\mp x_n^\pm(\alpha_2))^{\alpha(\omega)} .
\]
\end{lem}

\begin{proof}
The first estimates follow directly from the definition of the inverse
branches. For example,
\[
y_n^+(\omega)
=
\frac1{2\alpha(\omega)}
(1+x_n^-(\sigma\omega))^{\alpha(\omega)}.
\]
Since \(\alpha(\omega)\in[\alpha_1,\alpha_2]\), the multiplicative factor is
uniformly bounded above and below. The estimates for
\(m(\delta_n^\pm)\) follow by taking differences. The comparison with the
deterministic endpoints follows from \eqref{ineq_1}.
\end{proof}

\begin{lem}\label{lem:low-xn}
Assume \ref{itm:A1}. Then there exist constants \(C>0\), \(u>0\), \(v>0\)
and a random variable \(n_1:\Omega\to\mathbb N\) such that
\[
1\mp x_n^\pm(\omega)
\leq
C\left(\frac{(\log n)^q}{n}\right)^{1/(\alpha_1-1)}
\]
for all \(n\geq n_1(\omega)\) and \(\mathbb P\)-a.e. \(\omega\). Moreover,
\(
\mathbb P\{n_1>n\}\leq Ce^{-un^v}
\)
for every \(n\in\mathbb N\).
\end{lem}

\begin{proof}
We prove the estimate on the positive side; the negative side follows by symmetry.
Using \eqref{lem:eqn3'} and the inequality
\[
(1+y)^{-\beta}
\leq
1-\beta y+\frac{\beta(1+\beta)}{2}y^2,
\qquad y\geq0,\ \beta>0,
\]
we obtain
\begin{align*}
&[1-x_n^+(\omega)]^{-(\alpha_1-1)}
-
[1-x_{n-1}^+(\sigma\omega)]^{-(\alpha_1-1)}
\\
&\quad\geq
(\alpha_1-1)f_n(\omega)
[1-x_n^+(\omega)]^{\alpha(\omega)-\alpha_1}
-
\frac{\alpha_1(\alpha_1-1)}{2}
f_n(\omega)^2
[1-x_n^+(\omega)]^{2\alpha(\omega)-\alpha_1-1},
\end{align*}
where
\[
f_n(\omega)
=
\frac{1}{2\alpha(\omega)}
+
u_{\alpha(\omega)}(x_n^+(\omega)).
\]
Iterating along the shifted sequence \(x_k^+(\sigma^{n-k}\omega)\), and using the
deterministic comparisons with the endpoints for \(\alpha_1\) and \(\alpha_2\), gives
\[
[1-x_n^+(\omega)]^{-(\alpha_1-1)}
\geq
\sum_{k=1}^n A_{n,k}(\omega).
\]
By Assumption~\ref{itm:A1}, there are \(c_A>0\) and \(N_A(\omega)\) with
stretched exponential tail such that
\[
\frac{(\log n)^q}{n}
\sum_{k=1}^n A_{n,k}(\omega)
\geq c_A
\]
for all \(n\geq N_A(\omega)\). Hence, for such \(n\),
\[
\frac{(\log n)^q}{n[1-x_n^+(\omega)]^{\alpha_1-1}}
\geq c_A,
\]
and therefore
\[
1-x_n^+(\omega)
\leq
c_A^{-1/(\alpha_1-1)}
\left(\frac{(\log n)^q}{n}\right)^{1/(\alpha_1-1)}.
\]
Taking \(n_1=N_A\), after enlarging constants, proves the claim.
\end{proof}
\begin{cor}\label{cor:y_n}
Assume \ref{itm:A1}. There exist constants \(C>0\), \(u>0\), \(v>0\) and a
random variable \(n_1:\Omega\to\mathbb N\) such that
\[
|y_n^\pm(\omega)|
\leq
C\left(\frac{(\log n)^q}{n}\right)^{\alpha_1/(\alpha_1-1)}
\]
for all \(n\geq n_1(\omega)\) and \(\mathbb P\)-a.e. \(\omega\). Moreover,
\(
\mathbb P\{n_1>n\}\leq Ce^{-un^v}
\)
for every \(n\in\mathbb N\).
\end{cor}

\begin{proof}
We prove the estimate for \(y_n^-\); the other side is identical. Since
\(
g_\omega(y_n^-(\omega))=x_n^+(\sigma\omega),
\)
the defining equation for \(g_\omega\) gives
\[
|y_n^-(\omega)|
=
\frac{1}{2\alpha(\omega)}
\bigl(1-x_n^+(\sigma\omega)\bigr)^{\alpha(\omega)}.
\]
By Lemma~\ref{lem:low-xn}, for \(n\geq n_1(\sigma\omega)\),
\[
1-x_n^+(\sigma\omega)
\leq
C\left(\frac{(\log n)^q}{n}\right)^{1/(\alpha_1-1)}.
\]
Since \(\alpha(\omega)\geq\alpha_1\), this implies
\[
|y_n^-(\omega)|
\leq
C\left(\frac{(\log n)^q}{n}\right)^{\alpha_1/(\alpha_1-1)}.
\]
Replacing \(n_1(\omega)\) by \(n_1(\omega)\vee n_1(\sigma\omega)\) preserves the
stretched exponential tail after changing the constants. This completes the proof.
\end{proof}
\begin{cor}\label{cor:delta_n-and-Delta_n}
There exist constants \(C>0\), \(u>0\), \(v>0\), and a random variable
\(n_1:\Omega\to\mathbb N\) such that, for all \(n\geq n_1(\omega)\),
\[
m(\Delta_n^{\pm}(\omega))
\leq
C\left(\frac{(\log n)^q}{n}\right)^{\frac{1}{\alpha_1-1}},
\qquad
m(\delta_n^{\pm}(\omega))
\leq
C\left(\frac{(\log n)^q}{n}\right)^{\frac{\alpha_1}{\alpha_1-1}}.
\]
Moreover,
\(
\mathbb P\{n_1(\omega)>n\}
\leq
C e^{-un^v}\)
 for all  \(n\in\mathbb N.\)
\end{cor}
\subsection{Distortion and Tails}\label{sec:distNtail}

\subsubsection{Distortion estimates}\label{sec:dist}

In this section, we verify the condition \((P2)\) as defined in Section \ref{sec:overview}. The estimate is first
proved for the induced map
\[
\widehat g_\omega=g_\omega^{R_\omega}:\Lambda_\omega\to
\Lambda_{\sigma^{R_\omega}\omega}.
\]
Since the tower map has unit Jacobian along tower levels, the Jacobian of
\(F_\omega^{R_\omega}\) on the base agrees with the Jacobian of
\(\widehat g_\omega\).

Each induced branch is the composition of two one sided excursions. For
a single excursion, the singular step is estimated in distance
coordinates from the singular point and the corresponding neutral
endpoint. The distortion accumulated during the subsequent neutral
iterates telescopes in endpoint distance coordinates, yielding a bound
in terms of the length of the final image. Applying this estimate to
the two excursions and using uniform expansion gives the Euclidean
distortion bound for the induced branch, and hence condition \((P2)\).

We first control the contribution of the singular region at the origin.
\begin{lem}
\label{lem:Pik-singular-step}
There exists \(C>0\) such that the following holds. For
\(\omega\in\Omega\), let
\[
\Psi_\omega(t)
:=
\bigl(2\alpha(\omega)t\bigr)^{1/\alpha(\omega)},
\qquad
0<t\leq\frac{1}{2\alpha(\omega)}.
\]
This is the expression of the inverse singular branch in the distance
coordinates of \(0\) in the domain and from the corresponding
neutral endpoint in the image. Then, for every
\(
0<u<v\leq 1/{2\alpha(\omega)},
\)
we have
\[
\left|
\log
\frac{D\Psi_\omega(v)}
     {D\Psi_\omega(u)}
\right|
\leq
C
\log
\frac{\Psi_\omega(v)}
     {\Psi_\omega(u)}.
\]
The constant \(C\) is independent of \(\omega\).
\end{lem}

\begin{proof}
Write
\(
\alpha=\alpha(\omega).
\)
Since
\(
\Psi_\omega(t)=(2\alpha t)^{1/\alpha},
\)
we have
\(
D\Psi_\omega(t)
=
\frac{1}{\alpha}
(2\alpha)^{1/\alpha}
t^{1/\alpha-1}.
\)
Therefore,
\[
\begin{aligned}
\left|
\log
\frac{D\Psi_\omega(v)}
     {D\Psi_\omega(u)}
\right|
=
\frac{\alpha-1}{\alpha}
\log\frac vu=
(\alpha-1)
\log
\frac{\Psi_\omega(v)}
     {\Psi_\omega(u)}\leq
(\alpha_2-1)
\log
\frac{\Psi_\omega(v)}
     {\Psi_\omega(u)}.
\end{aligned}
\]
\end{proof}

We now control the contribution of the neutral region; near \(\pm 1\).
\begin{lem}
\label{lem:Pik-neutral-step}
There exists \(C>0\) such that the following holds. For
\(\omega\in\Omega\), let
\(
\Phi_\omega:
[0,1-1/{2\alpha(\omega)}]
\longrightarrow[0,1]
\)
be defined implicitly by
\[
t
=
\Phi_\omega(t)
-
1/{2\alpha(\omega)}
\Phi_\omega(t)^{\alpha(\omega)}.
\]
Thus \(\Phi_\omega\) is the expression of the inverse neutral branch in distance coordinates from the corresponding neutral endpoint. Then, for every
\(
0<u<v\leq1-1/{2\alpha(\omega)},
\)
we have
\[
\left|
\log
\frac{D\Phi_\omega(v)}
     {D\Phi_\omega(u)}
\right|
\leq
C\left(
\log
\frac{\Phi_\omega(v)}
     {\Phi_\omega(u)}
-
\log\frac vu
\right).
\]
The constant \(C\) is independent of \(\omega\).
\end{lem}

\begin{proof}
Fix \(\omega\in\Omega\), and write
\(
\alpha=\alpha(\omega), s=\Phi_\omega(t).
\)
The defining relation is
\(
t=s-1/{2\alpha}s^\alpha.
\)
Differentiating gives
\[
D\Phi_\omega(t)
=
\frac{1}
{1-\frac12\Phi_\omega(t)^{\alpha-1}}.
\]
A further differentiation yields
\[
\frac{D^2\Phi_\omega(t)}
     {D\Phi_\omega(t)}
=
\frac{\alpha-1}{2}
\frac{
\Phi_\omega(t)^{\alpha-2}
}{
\left(
1-\frac12\Phi_\omega(t)^{\alpha-1}
\right)^2
}.
\]
On the other hand,
\(t=\Phi_\omega(t)\left(1-\Phi_\omega(t)^{\alpha-1}/{2\alpha}\right)\),
and hence
\[
\frac{d}{dt}
\log\frac{\Phi_\omega(t)}{t}
=
\frac{\alpha-1}{2\alpha}
\frac{
\Phi_\omega(t)^{\alpha-2}
}{
\left(
1-\frac12\Phi_\omega(t)^{\alpha-1}
\right)
\left(
1-\frac{1}{2\alpha}
\Phi_\omega(t)^{\alpha-1}
\right)
}.
\]
Consequently,
\[
\frac{
\displaystyle
\frac{D^2\Phi_\omega(t)}
     {D\Phi_\omega(t)}
}{
\displaystyle
\frac{d}{dt}
\log\frac{\Phi_\omega(t)}{t}
}
=
\alpha
\frac{
1-\frac{1}{2\alpha}
\Phi_\omega(t)^{\alpha-1}
}{
1-\frac12
\Phi_\omega(t)^{\alpha-1}
}.
\]
Since
\(
0\leq\Phi_\omega(t)\leq1,
\)
we have
\(
1-\frac12\Phi_\omega(t)^{\alpha-1}
\geq\frac12\)
and
\(1-\frac{1}{2\alpha}
\Phi_\omega(t)^{\alpha-1}
\leq1.
\)
It follows that
\[
\left|
\frac{D^2\Phi_\omega(t)}
     {D\Phi_\omega(t)}
\right|
\leq
2\alpha_2
\frac{d}{dt}
\log\frac{\Phi_\omega(t)}{t}.
\]
Integrating between \(u\) and \(v\) gives
\[
\begin{aligned}
\left|
\log
\frac{D\Phi_\omega(v)}
     {D\Phi_\omega(u)}
\right|
\leq
\int_u^v
\left|
\frac{D^2\Phi_\omega(t)}
     {D\Phi_\omega(t)}
\right|\,dt\leq
2\alpha_2
\int_u^v
\frac{d}{dt}
\log\frac{\Phi_\omega(t)}{t}\,dt=
2\alpha_2
\left(
\log
\frac{\Phi_\omega(v)}
     {\Phi_\omega(u)}
-
\log\frac vu
\right).
\end{aligned}
\]
\end{proof}

The next estimate controls the logarithmic distortion of one-sided Pikovsky
excursions.
\begin{lem}
\label{lem:Pik-excursion-distortion}
There exists \(C>0\) such that, for every \(\omega\in\Omega\), every
\(n\geq1\), every \(0\leq m<n\), every choice of sign, and all
\(x,y\in\delta_n^\pm(\omega)\),
\[
\left|
\log
\frac{
\left|
Dg_{\sigma^m\omega}^{\,n-m}
\bigl(g_\omega^m(x)\bigr)
\right|
}{
\left|
Dg_{\sigma^m\omega}^{\,n-m}
\bigl(g_\omega^m(y)\bigr)
\right|
}
\right|
\leq
C\left|
g_\omega^n(x)-g_\omega^n(y)
\right|.
\]
In particular, the logarithmic distortion of every one sided
excursion is uniformly bounded.
\end{lem}

\begin{proof}
By odd symmetry, it is enough to consider \(
x,y\in\delta_n^-(\omega).\)
Set
\(
E_r:=g_\omega^r([x,y]),
\quad
0\leq r\leq n.
\)
For \(1\leq r\leq n\),
\(
E_r
\subset
\Delta_{n-r}^+(\sigma^r\omega).
\)
We use distance from the neutral endpoint \(1\), namely
\(d(z):=1-z.\)
Write
\[
d(E_r)=[u_r,v_r],
\quad
0<u_r<v_r,
\quad
\text{ and set } \quad
Q_r:=\log\frac{v_r}{u_r}.
\] 
The distance coordinates are affine changes in coordinates, so the
derivatives of the coordinate maps coincide with the absolute
derivatives of the corresponding branches of \(g_{\sigma^r\omega}\).

For \(1\leq r<n\), the map from \(d(E_r)\) to \(d(E_{r+1})\) is
\(\Phi_{\sigma^r\omega}\). Thus
\(u_{r+1}
=
\Phi_{\sigma^r\omega}(u_r),
\quad
v_{r+1}
=
\Phi_{\sigma^r\omega}(v_r).
\)

Suppose first that \(1\leq m<n\). Since the distance coordinates are
affine isometries, Lemma~\ref{lem:Pik-neutral-step} gives
\begin{equation}
\begin{aligned}
\left|
\log
\frac{
\left|
Dg_{\sigma^m\omega}^{\,n-m}
\bigl(g_\omega^m(x)\bigr)
\right|
}{
\left|
Dg_{\sigma^m\omega}^{\,n-m}
\bigl(g_\omega^m(y)\bigr)
\right|
}
\right|\leq
C\sum_{r=m}^{n-1}(Q_{r+1}-Q_r)=
C(Q_n-Q_m)
\leq
CQ_n.
\end{aligned}
\end{equation}

Now suppose that \(m=0\). Let \([a_0,b_0]\) be the image of \(E_0\)
at distance from the singular point \(0\). The singular coordinate
map satisfies
\(
[u_1,v_1]
=
\Psi_\omega([a_0,b_0]).
\)
Therefore, by Lemma~\ref{lem:Pik-singular-step},
\[
\left|
\log
\frac{|Dg_\omega(x)|}
     {|Dg_\omega(y)|}
\right|
\leq
CQ_1.
\]
The remaining iterates are neutral, and
Lemma~\ref{lem:Pik-neutral-step} gives
\[
\sum_{r=1}^{n-1}
\left|
\log
\frac{
\left|
Dg_{\sigma^r\omega}
\bigl(g_\omega^r(x)\bigr)
\right|
}{
\left|
Dg_{\sigma^r\omega}
\bigl(g_\omega^r(y)\bigr)
\right|
}
\right|
\leq
C(Q_n-Q_1).
\]
The sum is empty when \(n=1\). Hence, in all cases,
\[
\left|
\log
\frac{
\left|
Dg_{\sigma^m\omega}^{\,n-m}
\bigl(g_\omega^m(x)\bigr)
\right|
}{
\left|
Dg_{\sigma^m\omega}^{\,n-m}
\bigl(g_\omega^m(y)\bigr)
\right|
}
\right|
\leq
CQ_n.
\]

Since
\[E_n
\subset
\Delta_0^+(\sigma^n\omega),
\quand
\Delta_0^+(\sigma^n\omega)
=
\left(
0,\frac{1}{2\alpha(\sigma^n\omega)}
\right),
\]
we have
\(
u_n
\geq
d_0
:=
1-\frac{1}{2\alpha_1}
>0.
\)
Consequently,
\[
\begin{aligned}
Q_n
=
\log\frac{v_n}{u_n}=
\log\left(
1+\frac{v_n-u_n}{u_n}
\right)\leq
\frac{v_n-u_n}{u_n}\leq
d_0^{-1}m(E_n).
\end{aligned}
\]
Since
\(
m(E_n)
=
\left|
g_\omega^n(x)-g_\omega^n(y)
\right|,
\)
the desired estimate follows. The positive case is identical by an odd
symmetry.
\end{proof}

We now give Uniform expansion estimates for one sided excursions.
\begin{lem}
\label{lem:Pik-one-sided-expansion}
There exists \(\lambda>1\) such that, for every
\(\omega\in\Omega\), every \(n\geq1\), every choice of sign, and every
\(x\in\delta_n^\pm(\omega)\),
\(
\left|Dg_\omega^n(x)\right|\geq\lambda .
\)
\end{lem}

\begin{proof}
In either singular branch, the distance coordinate map is
\(
\Psi_\omega(t)
=
\bigl(2\alpha(\omega)t\bigr)^{1/\alpha(\omega)} .
\)

Hence
\[
D\Psi_\omega(t)
=
\frac{1}{\alpha(\omega)}
(2\alpha(\omega))^{1/\alpha(\omega)}
t^{1/\alpha(\omega)-1}.
\]
Since this derivative is decreasing in \(t\),

\[
D\Psi_\omega(t)
\geq
D\Psi_\omega
\left(
\frac1{2\alpha(\omega)}
\right)
=\frac{1}{\alpha(\omega)}
(2\alpha(\omega))^{1/\alpha(\omega)}
(2\alpha(\omega))^{-1/\alpha(\omega)+1} = 2,
\]
which proves the desired result.

Every subsequent iterate belongs to a neutral branch, where
\[
D\Phi_{\sigma^r\omega}(t)
=
\frac{1}
{1-\frac12
\Phi_{\sigma^r\omega}(t)^{\alpha(\sigma^r\omega)-1}}
\geq1 .
\]
The result follows from the chain rule.
\end{proof}

We first establish a distortion estimate for the induced branches in terms of the Euclidean distance between their images.
\begin{lem}
\label{lem:Pik-induced-distortion}
There exists \(C>0\) such that, for every \(\omega\in\Omega\), every
\(\delta_{ij}^-(\omega)\in\mathcal P_\omega\), and all
\(x,y\in\delta_{ij}^-(\omega)\),
\[
\left|
\log
\frac{
J\widehat g_\omega(x)
}{
J\widehat g_\omega(y)
}
\right|
\leq
C\left|
\widehat g_\omega(x)-\widehat g_\omega(y)
\right|.
\]
\end{lem}

\begin{proof}
Let
\(
R_\omega|_{\delta_{ij}^-(\omega)}=i+j.
\)
On \(\delta_{ij}^-(\omega), \quad
\widehat g_\omega
=
g_{\sigma^i\omega}^j\circ g_\omega^i.
\)
Set
\(
u=g_\omega^i(x),
\quad
v=g_\omega^i(y).
\)
Then
\(
u,v\in\delta_j^+(\sigma^i\omega).
\)
By the chain rule,
\[
\begin{aligned}
\left|
\log
\frac{
J\widehat g_\omega(x)
}{
J\widehat g_\omega(y)
}
\right|
\leq
\left|
\log
\frac{|Dg_\omega^i(x)|}
     {|Dg_\omega^i(y)|}
\right|\quad+
\left|
\log
\frac{
|Dg_{\sigma^i\omega}^j(u)|
}{
|Dg_{\sigma^i\omega}^j(v)|
}
\right|.
\end{aligned}
\]
By Lemma~\ref{lem:Pik-excursion-distortion}, the second term is bounded
by
\[
C\left|
g_{\sigma^i\omega}^j(u)
-
g_{\sigma^i\omega}^j(v)
\right|
=
C\left|
\widehat g_\omega(x)-\widehat g_\omega(y)
\right|,
\]
and the first term is bounded
by
\(C|u-v|.\)
By Lemma~\ref{lem:Pik-one-sided-expansion}, there exists \(\lambda>1\):
\[
|u-v|
\leq
\frac1\lambda
\left|
g_{\sigma^i\omega}^j(u)
-
g_{\sigma^i\omega}^j(v)
\right|.
\]
Combining these estimates proves the claim.
\end{proof}

We now prove condition (P2).
\begin{prop}\label{distortion}
There exist constants \(D>0\) and \(0<\rho<1\) such that for every
\(\omega\in\Omega\) and every \(x,y\in\delta_{ij}^{-}(\omega)\),
\[
\left|
\frac{JF_\omega^{R_\omega}(x)}
     {JF_\omega^{R_\omega}(y)}
-1
\right|
\leq
D\,\rho^{s(F_\omega^{R_\omega}(x,0),
F_\omega^{R_\omega}(y,0))}.
\]
\end{prop}

\begin{proof}
Let \(R=R_\omega(x)=R_\omega(y)\). Since \(F_\omega^R\) returns the base to
the base and has unit Jacobian along tower levels,
\[
JF_\omega^R(x)=J\widehat g_\omega(x)=|(g_\omega^R)'(x)|.
\]
By Lemma~\ref{lem:Pik-induced-distortion},
\[
\left|
\log
\frac{JF_\omega^R(x)}
     {JF_\omega^R(y)}
\right|
\leq
C
|\widehat g_\omega x-\widehat g_\omega y|.
\]
The induced map is full branch and uniformly expanding. Hence inverse
branches contract exponentially in separation time: there exist
\(C_0>0\) and \(0<\rho<1\) such that
\[
|\widehat g_\omega x-\widehat g_\omega y|
\leq
C_0\rho^{s(F_\omega^R(x,0),F_\omega^R(y,0))}.
\]
Therefore
\[
\left|
\log
\frac{JF_\omega^R(x)}
     {JF_\omega^R(y)}
\right|
\leq
C
\rho^{s(F_\omega^R(x,0),F_\omega^R(y,0))}.
\]
Since the logarithmic distortion is uniformly bounded,
\(|e^t-1|\lesssim |t|\) on the relevant range. Thus
\[
\left|
\frac{JF_\omega^{R_\omega}(x)}
     {JF_\omega^{R_\omega}(y)}
-1
\right|
\leq
D\,\rho^{s(F_\omega^{R_\omega}(x,0),
F_\omega^{R_\omega}(y,0))}.
\]
This is condition \((P2)\).
\end{proof}
\subsubsection{Tail Estimate}\label{sec:tail}

We verify conditions \((P4)\), \((P6)\), and \((P7)\) for the random tower.
The quenched and annealed tail bounds are proved first; the uniform finiteness
condition \((P6)\) is verified separately.

\begin{prop}\label{Tail}
There exist constants \(C>0\), \(u>0\), \(v>0\), a full measure set
\(G\subset\Omega\), and a random variable \(n_2:\Omega\to\mathbb N\) such
that for all \(n\ge n_2(\omega)\),
\[
m(R_\omega>n)
=
\sum_{i+j>n}m(\delta_{ij}^-(\omega))
\lesssim
(\log n)^{\frac{q\alpha_1}{\alpha_1-1}}
\,n^{-\frac1{\alpha_1-1}} .
\]
Moreover,
\(
\mathbb P\{n_2(\omega)>n\}
\le Ce^{-un^v}.
\)

In particular,
\[
\int_\Omega m\{x\in\Lambda_\omega:R_\omega(x)=n\}\,d\mathbb P
\lesssim
(\log n)^{\frac{q\alpha_1}{\alpha_1-1}}
\,n^{-\frac{\alpha_1}{\alpha_1-1}} .
\]
\end{prop}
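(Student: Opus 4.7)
The plan is to first decompose the tail set according to which of the two indices $i,j$ in the partition element $\delta_{ij}^-(\omega)$ is the large one, reducing the estimate to the already-controlled quantities $m(\delta_i^-(\omega))$ and $m(\delta_j^+(\sigma^i\omega))$. Concretely, I would write
\[
m(R_\omega > n) = \sum_{i+j > n} m(\delta_{ij}^-(\omega)) \leq \sum_{i > n/2} m(\delta_i^-(\omega)) + \sum_{i \leq n/2}\sum_{j > n/2} m(\delta_{ij}^-(\omega)),
\]
using that $\{\delta_{ij}^-(\omega)\}_j$ partitions $\delta_i^-(\omega)$. For the first sum, Corollary~\ref{cor:delta_n-and-Delta_n} gives $m(\delta_i^-(\omega)) \lesssim ((\log i)^q/i)^{\alpha_1/(\alpha_1-1)}$ whenever $i > n_1(\omega)$, and summing the tail from $i > n/2$ collapses one power to produce the dominant contribution $(\log n)^{q\alpha_1/(\alpha_1-1)}\, n^{-1/(\alpha_1-1)}$.

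For the second (off-diagonal) sum, I would invoke bounded distortion from Proposition~\ref{distortion} for the diffeomorphism $g_\omega^i : \delta_i^-(\omega) \to \Delta_0^+(\sigma^i\omega)$, which carries $\bigcup_{j>n/2}\delta_{ij}^-(\omega)$ onto $\bigcup_{j>n/2}\delta_j^+(\sigma^i\omega) \subset (0, y_{n/2}^+(\sigma^i\omega))$. The resulting inequality
\[
\sum_{j > n/2} m(\delta_{ij}^-(\omega)) \lesssim \frac{m(\delta_i^-(\omega))}{m(\Delta_0^+(\sigma^i\omega))}\, y_{n/2}^+(\sigma^i\omega),
\]
combined with the upper bound $y_{n/2}^+(\sigma^i\omega) \lesssim ((\log n)^q/n)^{\alpha_1/(\alpha_1-1)}$ derived from the formula $y_n^+(\omega) = (2\alpha(\omega))^{-1}(1+x_{n-1}^-(\sigma\omega))^{\alpha(\omega)}$ together with Lemma~\ref{lem:low-xn}, reduces the double sum to $((\log n)^q/n)^{\alpha_1/(\alpha_1-1)} \cdot \sum_{i \leq n/2} m(\delta_i^-(\omega))$, which is absorbed by the first region's contribution since $\sum_i m(\delta_i^-(\omega)) \leq m(\Delta_0^-(\omega)) = O(1)$.

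The main obstacle is that the estimate for $y_{n/2}^+(\sigma^i\omega)$ requires $n/2 > n_1(\sigma^i\omega)$ for every $i \leq n/2$ simultaneously. To handle this I would run a Borel--Cantelli argument: since $P\{n_1 > m\} \leq C e^{-u m^v}$, a union bound gives
\[
P\Bigl\{ \sup_{0 \leq i \leq n/2} n_1(\sigma^i\omega) > n/2\Bigr\} \leq (n/2+1)\, C e^{-u(n/2)^v},
\]
which is summable in $n$. Defining $n_2(\omega)$ as the largest $n$ for which this bad event occurs produces a full-measure set on which the quenched estimate holds, and the tail of $n_2$ inherits an $e^{-u' n^{v'}}$ bound (for slightly smaller $v'$) from the summation. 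A uniform positive lower bound on $m(\Delta_0^+(\sigma^i\omega))$, which follows from $x_0^+(\omega)$ being bounded below uniformly over $\omega$, is also needed here.

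For the annealed estimate I would exploit the product structure $P = \nu^{\mathbb{Z}}$: the random variable $m(\delta_i^-(\omega))$ depends only on $\omega_0,\dots,\omega_{i-1}$ while $m(\delta_j^+(\sigma^i\omega))$ depends only on $\omega_i,\dots,\omega_{i+j-1}$, so they are independent. Combined with bounded distortion and the uniform lower bound on $m(\Delta_0^+)$, this gives
\[
\int_\Omega m(\delta_{ij}^-(\omega))\,dP \lesssim \Bigl(\int_\Omega m(\delta_i^-(\omega))\,dP\Bigr)\Bigl(\int_\Omega m(\delta_j^+(\omega))\,dP\Bigr),
\]
using also $\sigma$-invariance of $P$. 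Applying the deterministic upper bound $m(\delta_n^\mp(\omega)) \lesssim (1\mp x_{n-1}^\pm(\alpha_2))^{\alpha(\omega)}$ from \eqref{ineq_11} together with $1 - x_{n-1}^+(\alpha_2) \sim n^{-1/(\alpha_2-1)}$ from Lemma~\ref{lem:eqn3}, integration against $\nu$ yields $\int_\Omega m(\delta_n^\pm(\omega))\,dP \lesssim n^{-\alpha_2/(\alpha_2-1)}$ up to logarithmic factors. Summing $i+j=n$ then produces the announced bound $(\log n)^{q\alpha_1/(\alpha_1-1)} n^{-\alpha_2/(\alpha_2-1) - 1}$ after noting that the convolution-type sum concentrates on the boundary $i$ or $j$ close to $n$.
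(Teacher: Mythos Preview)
Your quenched argument is essentially the paper's: both split the tail according to which index $i$ or $j$ exceeds $n/2$, feed the large-index piece through Corollary~\ref{cor:delta_n-and-Delta_n}, handle the other piece via the bounded-distortion comparison $m(\delta_{ij}^-(\omega))\lesssim m(\delta_i^-(\omega))\,m(\delta_j^+(\sigma^i\omega))/m(\Delta_0^+(\sigma^i\omega))$, and control the simultaneous requirement $n/2>n_1(\sigma^i\omega)$ for all relevant $i$ by the union bound $P\{\max_{0\le i\le n}n_1(\sigma^i\omega)>n\}\le (n+1)e^{-un^v}$ (the paper simply \emph{defines} $n_2(\omega)=\max_{0\le i\le n}n_1(\sigma^i\omega)$ rather than running Borel--Cantelli explicitly, but this is cosmetic).

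The annealed part, however, diverges from the paper and contains a gap. The paper does \emph{not} exploit independence; it just writes
\[
\int_\Omega \sum_{i+j=n} m(\delta_{i,j}(\omega))\,dP
\le \int_{\{n_2<n\}} \sum_{i+j=n} m(\delta_{i,j})\,dP + P\{n_2>n\},
\]
inserts the already-proved \emph{quenched level-set} bound $\sum_{i+j=n} m(\delta_{i,j}(\omega))\lesssim (\log n)^{q\alpha_1/(\alpha_1-1)} n^{-\alpha_1/(\alpha_1-1)}$ on the good set, and absorbs the bad set by its stretched-exponential tail. This immediately gives the rate required by \ref{itm:P7}. Your route via $\int m(\delta_{ij}^-)\,dP\lesssim (\int m(\delta_i^-)\,dP)(\int m(\delta_j^+)\,dP)$ is legitimate in principle, but the input you quote is too crude: inequality~\eqref{ineq_11} only says $m(\delta_n^-(\omega))\lesssim (1-x_{n-1}^+(\alpha_2))^{\alpha(\omega)}$, and since $1-x_{n-1}^+(\alpha_2)\sim n^{-1/(\alpha_2-1)}<1$ and $\alpha(\omega)\ge\alpha_1$, integration over $\nu$ yields at best
\[
\int_\Omega m(\delta_n^\pm(\omega))\,dP \lesssim n^{-\alpha_1/(\alpha_2-1)},
\]
not $n^{-\alpha_2/(\alpha_2-1)}$. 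The exponent $\alpha_1/(\alpha_2-1)$ is strictly smaller than $\alpha_1/(\alpha_1-1)$, so after the convolution $\sum_{i+j=n}$ you fall short of the rate required by \ref{itm:P7}. The point is that \eqref{ineq_11} bounds $m(\delta_n^\pm(\omega))$ by the \emph{endpoint} $|y_{n-1}^\pm|$ rather than by a genuine difference estimate, so it loses a full power of $n$; to make your independence strategy work you would need a pointwise bound of the form $m(\delta_n^\pm(\omega))\lesssim n^{-\alpha_1/(\alpha_1-1)-1}$ uniformly (or after integration), which is not supplied by the results you cite. The paper's shortcut of integrating the quenched level-set estimate avoids this difficulty entirely.
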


\begin{lem}\label{lem:cyl}
For each \(i,j\ge1\), bounded distortion implies
\[
m(\delta_{ij}^-(\omega))
\asymp
m(\delta_i^-(\omega))\,
m(\delta_j^+(\sigma^i\omega)).
\]
\end{lem}

\begin{lem}\label{lem:det-tail}
For all \(n\ge n_1(\omega)\),
\[
m(\delta_n^\pm(\omega))
\lesssim
(\log n)^{\frac{q\alpha_1}{\alpha_1-1}}
n^{-\frac{\alpha_1}{\alpha_1-1}} .
\]
\end{lem}

\begin{proof}
This is exactly the second estimate in Corollary~\ref{cor:delta_n-and-Delta_n},
after expanding
\[
\left(\frac{(\log n)^q}{n}\right)^{\alpha_1/(\alpha_1-1)}
=
(\log n)^{q\alpha_1/(\alpha_1-1)}
n^{-\alpha_1/(\alpha_1-1)} .
\]
\end{proof}

\begin{lem}\label{lem:product-sum}
For all \(n\ge n_2(\omega)\),
\[
\sum_{i+j=n}
m(\delta_i^-(\omega))\,m(\delta_j^+(\sigma^i\omega))
\lesssim
(\log n)^{\frac{q\alpha_1}{\alpha_1-1}}
n^{-\frac{\alpha_1}{\alpha_1-1}} .
\]
\end{lem}

\begin{lem}\label{lem:pikovsky-P6}
The Pikovsky tower satisfies property \((P6)\). More precisely, there exists
\(M<\infty\) such that
\[
\bar m_\omega(\Delta_\omega)\le M
\]
for every \(\omega\in\Omega\).
\end{lem}

\begin{proof}
Set
\(
r_P:=\alpha_1/{(\alpha_2-1)}.
\)
By the standing assumption \(\alpha_2<\alpha_1+1\), we have \(r_P>1\).

Using the deterministic endpoint comparison with the slowest endpoint sequence
\(x_n^\pm(\alpha_2)\), together with the difference estimate for
\(\delta_n^\pm(\omega)\), we obtain
\[
m(\delta_n^\pm(\omega))
\lesssim
n^{-r_P-1},
\]
uniformly in \(n\) and \(\omega\).

By the cylinder decomposition in Lemma~\ref{lem:cyl},
\[
m(\delta_{ij}^-(\omega))
\lesssim
m(\delta_i^-(\omega))m(\delta_j^+(\sigma^i\omega))
\lesssim
i^{-r_P-1}j^{-r_P-1}.
\]
Hence
\[
\bar m_\omega(\Delta_\omega)
=
\sum_{i,j\ge1}(i+j)m(\delta_{ij}^-(\omega))
\lesssim
\sum_{i,j\ge1}(i+j)i^{-r_P-1}j^{-r_P-1}.
\]
The last series is finite because \(r_P>1\). Thus \((P6)\) holds.
\end{proof}

\begin{proof}[Proof of Proposition~\ref{Tail}]
By Lemma~\ref{lem:cyl},
\[
m(R_\omega>n)
\le
\sum_{i+j>n}
m(\delta_i^-(\omega))\,m(\delta_j^+(\sigma^i\omega)).
\]

Applying Lemma~\ref{lem:product-sum} and summing over the tail gives
\[
m(R_\omega>n)
\lesssim
(\log n)^{\frac{q\alpha_1}{\alpha_1-1}}
n^{-\frac1{\alpha_1-1}} .
\]

Let \(n_2(\omega)\) be the minimal integer such that the endpoint estimates needed in
Lemma~\ref{lem:product-sum} hold for all relevant iterates. The stretched exponential
estimate follows from the corresponding estimates for the endpoint cutoffs:
\[
\mathbb P\{n_2(\omega)>n\}
\le
\sum_{k\ge n}\mathbb P\{n_1(\sigma^k\omega)>k\}
\le
Ce^{-un^v}.
\]

Finally, since \(R_\omega=i+j\) on \(\delta_{ij}^-(\omega)\), Lemma~\ref{lem:cyl} and
Lemma~\ref{lem:product-sum} give
\[
\begin{aligned}
\int_\Omega m\{x\in\Lambda_\omega:R_\omega(x)=n\}\,d\mathbb P
&=
\int_\Omega \sum_{i+j=n}m(\delta_{ij}^-(\omega))\,d\mathbb P(\omega) \\
&\lesssim
(\log n)^{\frac{q\alpha_1}{\alpha_1-1}}
n^{-\frac{\alpha_1}{\alpha_1-1}} .
\end{aligned}
\]
This proves \((P4)\) and \((P7)\).
\end{proof}

\subsubsection{Decay of correlations}

We establish decay of correlations for the random system.

In the Pikovsky case, Lebesgue measure is fibrewise invariant. The lifted tower
measure projects to \(m\), and the random constants appearing in the tower
regularity estimates are absorbed into \(C_\omega\).

Let \(0<\gamma<1\) be the constant from Proposition \ref{distortion}.

\subsubsection*{Function spaces}

We consider the spaces
\begin{align*}
\mathcal{F}^1_\gamma
&=
\Bigl\{
\varphi_\omega:\Delta_\omega\to\mathbb R :
|\varphi_\omega(x)-\varphi_\omega(y)|
\le C_\varphi \gamma^{s(x,y)}
\Bigr\},\\
\mathcal{F}^+_\gamma
&=
\Bigl\{
\varphi_\omega:\Delta_\omega\to\mathbb R^+ :
\varphi_\omega|J_\omega>0 \ \text{or}\ \equiv 0,\;
\left|\log\frac{\varphi_\omega(x)}{\varphi_\omega(y)}\right|
\le C_\varphi \gamma^{s(x,y)}
\ \forall x,y\in J_\omega\in\mathcal P_\omega
\Bigr\}.
\end{align*}
Here \(s(x,y)\) is the separation time associated to \(\mathcal P_\omega\).

\subsubsection*{Lifted observables}

Let
\[
\varphi=(\varphi_\omega)_{\omega\in\Omega},
\qquad
\psi=(\psi_\omega)_{\omega\in\Omega}
\]
be fibre observables satisfying the assumptions of Theorem~\ref{thm2}. Define
\[
\bar\varphi_\omega=\varphi_\omega\circ\pi_\omega,
\qquad
\bar\psi_\omega=\psi_\omega\circ\pi_\omega .
\]
Then, for all \(n\ge0\),
\[
\int_{\mathbb T}
(\varphi_{\sigma^n\omega}\circ g_\omega^n)\psi_\omega\,dm
=
\int_{\Delta_\omega}
(\bar\varphi_{\sigma^n\omega}\circ F_\omega^n)
\bar\psi_\omega\,d\bar m_\omega .
\]

\subsubsection*{Regularity}

If \(x,y\in\Delta_\omega\) have separation time \(s(x,y)\), then the
expansion of the induced map gives
\[
|\pi_\omega(x)-\pi_\omega(y)|
\le C\rho^{s(x,y)}
\]
for some \(0<\rho<1\). Since
\(
\sup_{\omega}\|\psi_\omega\|_{\mathcal C^\eta}<\infty,
\)
the lifted family \(\bar\psi_\omega=\psi_\omega\circ\pi_\omega\) belongs
uniformly to the tower Hölder space.


\subsubsection*{Application of the tower formalism}

The conditions (P1)--(P7) hold for the tower. In particular:
distortion is controlled by Proposition \ref{distortion},
return time tails are given by Proposition \ref{Tail},
and the tower reference measures satisfy the required invariance properties.

Applying Theorem 4.2 of \cite{BahBosRuz19} yields quenched decay of correlations:
\[
\mathrm{Cor}_{n,\omega}(\bar\varphi,\bar\psi)
=
\int_{\Delta_\omega}
(\bar\varphi_{\sigma^n\omega}\circ F_\omega^n)
\bar\psi_\omega\,d\bar m_\omega
-
\int_{\Delta_{\sigma^n\omega}}\bar\varphi_{\sigma^n\omega}\,d\bar m_{\sigma^n\omega}
\int_{\Delta_\omega}\bar\psi_\omega\,d\bar m_\omega,
\]
with polynomial rate determined by the return time tail.

\subsubsection*{Future and past correlations}

The same argument applied in both time directions (Theorem 4.3 of
\cite{BahBosRuz19}) gives
\[
|\operatorname{Cor}_{n,\omega}(\phi,\psi)|
\le
C_\omega C_{\phi,\psi}
n^{-\left(\frac1{\alpha_1-1}-1-\delta\right)}.
\]

This completes the argument.

\section{Random Grossmann--Horner Maps}\label{secthm3}

We verify the tower properties for the random Grossmann--Horner family. The
construction is analogous to the Pikovsky case, with the return structure
adapted to the alternating dynamics between the two branches of \(h_\omega\).

\subsection{Inducing construction}\label{sec:GH-tower}

Set
\(
\Lambda_\omega=\Delta_0^-(\omega).
\)
For \(n\geq1\), define
\[
\Delta_n^-(\omega)
=
h_\omega^{-1}
\bigl(\Delta_{n-1}^-(\sigma\omega)\bigr)
\cap I^-,
\quand
\Delta_n^+(\omega)
=
h_\omega^{-1}
\bigl(\Delta_{n-1}^-(\sigma\omega)\bigr)
\cap I^+ .
\]
Then
\(\{\Delta_n^-(\omega)\}_{n\geq0}\) and
\(\{\Delta_n^+(\omega)\}_{n\geq0}\) are mod \(0\) partitions of
\(I^-\) and \(I^+\), respectively.

For \(n\geq1\), define
\[
\delta_n^-(\omega)
=
h_\omega^{-1}
\bigl(\Delta_{n-1}^+(\sigma\omega)\bigr)
\cap
\Lambda_\omega .
\]
For \(n\geq1\), these elements form a mod \(0\) partition of
\(\Lambda_\omega\), and for  \(n\geq1\),
\(
h_\omega^n:
\delta_n^-(\omega)
\longrightarrow
\Lambda_{\sigma^n\omega}
\)
is a \(C^2\) diffeomorphism.

The remaining first return element is decomposed as
\[
\delta_{1,n}^-(\omega)
=
h_\omega^{-1}
\bigl(\delta_n^+(\sigma\omega)\bigr)\cap \delta_{1}^-(\omega),
\quad n\geq1,
\]
where
\(
\delta_n^+(\omega)
=
h_\omega^{-1}
\bigl(\Delta_{n-1}^-(\sigma\omega)\bigr)
\cap \Delta_{0}^+(\omega) .
\)
Then
\[
h_\omega^{n+1}:
\delta_{1,n}^-(\omega)
\longrightarrow
\Lambda_{\sigma^{n+1}\omega}
\]
is a \(C^2\) diffeomorphism, and \(\delta_1^-(\omega)=\cup_{i\ge 1}\delta_{1,i}^-(\omega) \mod 0.\)

Define
\[
\mathcal P_\omega
=
\{\delta_n^-(\omega):n\geq2\}
\cup
\{\delta_{1,n}^-(\omega):n\geq1\}.
\]
The return time is
\[
R_\omega|_{\delta_n^-(\omega)}=n,
\qquad n\geq2,
\]
and
\(
R_\omega|_{\delta_{1,n}^-(\omega)}=n+1.
\)

\begin{prop}\label{prop:gh-basic}
The random Grossmann--Horner tower satisfies properties (P1), (P3), and
(P5).
\end{prop}

\begin{proof}
Property (P1) follows from the construction, since every element of
\(\mathcal P_\omega\) is mapped by \(h_\omega^{R_\omega}\) bijectively onto
the corresponding base \(\Lambda_{\sigma^{R_\omega}\omega}\).

The generating property (P3) follows from the uniform expansion of the
induced branches.

Finally, the return times \(2\) and \(3\) occur through
\(\delta_2^-(\omega)\) and \(\delta_{1,2}^-(\omega)\), respectively.
Their measures are uniformly bounded below by continuity of the endpoints
and compactness of the parameter range. Since
\(\gcd(2,3)=1\), property (P5) holds.
\end{proof}

\subsection{Endpoint estimates}

We verify (P2), (P4), (P6), and (P7) using estimates on the endpoint
sequences and the induced partition elements.

Define the endpoint sequences by
\[
x_n^-(\omega)=h_\omega^{-n}(0),
\qquad
x_n^+(\omega)
=
(h_\omega|_{I^+})^{-1}
(x_{n-1}^-(\sigma\omega)),
\quad n\geq1 .
\]
Then
\[
\Delta_n^-(\omega)
=
(x_{n+1}^-(\omega),x_n^-(\omega)),
\qquad
\Delta_n^+(\omega)
=
(x_n^+(\omega),x_{n+1}^+(\omega)).
\]

For the first-return partition, set
\[
y_n^-(\omega)
=
(h_\omega|_{I^-})^{-1}
(x_{n-1}^+(\sigma\omega)),
\qquad
y_n^+(\omega)
=
(h_\omega|_{I^+})^{-1}
(x_{n-1}^-(\sigma\omega)),
\]
so that
\(
\delta_n^-(\omega)
=
(y_n^-(\omega),y_{n+1}^-(\omega)).
\)

For constant parameters, the endpoint estimates from
\cite{gnsp} give

\begin{lem}\label{lem:gh-deterministic}
For the deterministic Grossmann--Horner map,
\[
1-x_n^\pm
\asymp
n^{-\frac1{\gamma-1}},
\qquad
m(\Delta_n^\pm)
\asymp
n^{-\frac{\gamma}{\gamma-1}},
\]
and
\[
|y_n^\pm|
\asymp
n^{-\frac1{k(\gamma-1)}},
\qquad
m(\delta_n^\pm)
\asymp
n^{-\frac{k(\gamma-1)+1}{k(\gamma-1)}} .
\]
\end{lem}

The random endpoints are compared with the deterministic ones. If
\(\gamma_1\leq\gamma(\omega)\leq\gamma_2\), then
\[
1-x_n^\pm(\gamma_1)
\lesssim
1-x_n^\pm(\omega)
\lesssim
1-x_n^\pm(\gamma_2).
\]

Moreover, by the local form of the inverse branches near the singular
point,
\[
|y_n^\pm(\omega)|
\asymp
(1\mp x_{n-1}^{\mp}(\sigma\omega))^{1/k(\omega)} .
\]
Consequently,
\(
m(\delta_n^\pm(\omega))
\asymp
(1\mp x_{n-1}^{\mp}(\sigma\omega))^{1/k(\omega)}
-
(1\mp x_n^{\mp}(\sigma\omega))^{1/k(\omega)} .
\)

\begin{lem}\label{lem:low-xn-gh}
Assume \ref{itm:B1}. Then there exist constants \(C,u,v>0\) and a
random variable \(n_1:\Omega\to\mathbb N\), with
\(
\mathbb P\{n_1>n\}\le Ce^{-un^v},
\)
such that for all \(n\ge n_1(\omega)\),
\[
(1\mp x_n^\pm(\omega))^{\gamma_1-1}
\le
C\frac{(\log n)^q}{n}.
\]
\end{lem}

\begin{proof}
We prove the estimate for the negative endpoint; the positive endpoint follows
similarly. By Assumption~\((B)\), there exist \(c_B>0\) and a random variable
\(N_B:\Omega\to\mathbb N\) satisfying
\[
\mathbb P\{N_B>n\}\le C_Be^{-u_Bn^{v_B}},
\]
such that, for all \(n\ge N_B(\omega)\),
\[
\frac{(\log n)^q}{n}
\sum_{k=1}^n B_{n,k}(\omega)
\ge c_B .
\]
On the other hand, the endpoint recursion gives
\(
\frac1{(1+x_n^-(\omega))^{\gamma_1-1}}
\ge
\sum_{k=1}^n B_{n,k}(\omega).
\)
Therefore, for \(n\ge N_B(\omega)\),
\(
\frac{(\log n)^q}{n}
\frac1{(1+x_n^-(\omega))^{\gamma_1-1}}
\ge c_B ,
\)
and hence
\[
(1+x_n^-(\omega))^{\gamma_1-1}
\le
\frac1{c_B}\frac{(\log n)^q}{n}.
\]
The estimate for the positive endpoint follows similarly. The tail estimate for
\(n_1\) follows directly from the stretched-exponential tail of \(N_B\).
\end{proof}

\begin{cor}\label{cor:y_n_scaling}
Under the assumptions of Lemma~\ref{lem:low-xn-gh},
\[
|y_n^\pm(\omega)|
\le
C\left(\frac{(\log n)^q}{n}\right)^{1/\zeta},
\qquad
\zeta=k_2(\gamma_1-1).
\]
for all \(n\geq n_1(\omega)\).
\end{cor}
\begin{proof}
By the singular inverse estimate,
\[
|y_n^\pm(\omega)|
\asymp
(1\mp x_{n-1}^{\mp}(\sigma\omega))^{1/k(\omega)}.
\]
Since \(k(\omega)\le k_2\), Lemma~\ref{lem:low-xn-gh} gives
\[
|y_n^\pm(\omega)|
\le
C\left(\frac{(\log n)^q}{n}\right)^{1/(k_2(\gamma_1-1))}
=
C\left(\frac{(\log n)^q}{n}\right)^{1/\zeta}.
\]
Replacing the cutoff by its maximum with its shifted version preserves the
stretched exponential tail.
\end{proof}

\subsection{Distortion Estimates}\label{sec:dist'}

Condition (P2) is verified first for the induced map

Define the induced map pointwise by
\[
\widehat h_\omega(x)
=
h_\omega^{R_\omega(x)}(x),
\qquad x\in\Lambda_\omega.
\]
For every \(J_\omega\in\mathcal P_\omega\), the return time is constant
on \(J_\omega\), and
\[
\widehat h_\omega|_{J_\omega}
=
h_\omega^{R_\omega(J_\omega)}
:
J_\omega
\longrightarrow
\Lambda_{\sigma^{R_\omega(J_\omega)}\omega}.
\]

Because the tower map has unit Jacobian on every intermediate level, the
Jacobian of \(F_\omega^{R_\omega}\) on the base coincides with the Jacobian of
\(\widehat h_\omega\).

The two types of return branches have the following itineraries.

\[\underline{\text{For } J_\omega=\delta_{1,n}^-(\omega) }\text{ : } 
\delta_{1,n}^-(\omega)
\xrightarrow{\ h_\omega\ }
\delta_n^+(\sigma\omega)
\xrightarrow{\ h_{\sigma\omega}\ }
\Delta_{n-1}^-(\sigma^2\omega)
\xrightarrow{\ h_{\sigma^2\omega}\ }
\Delta_{n-2}^-(\sigma^3\omega)
\longrightarrow\cdots\longrightarrow
\Lambda_{\sigma^{n+1}\omega}.
\]
\[\underline{\text{For } J_\omega=\delta_n^-(\omega) }\text{ : } 
\delta_n^-(\omega)
\xrightarrow{\ h_\omega\ }
\Delta_{n-1}^+(\sigma\omega)
\xrightarrow{\ h_{\sigma\omega}\ }
\Delta_{n-2}^-(\sigma^2\omega)
\xrightarrow{\ h_{\sigma^2\omega}\ }
\Delta_{n-3}^-(\sigma^3\omega)
\longrightarrow\cdots\longrightarrow
\Lambda_{\sigma^n\omega}.
\]
The distortion estimate follows by decomposing each return branch according
to the above itineraries. The initial transition step, when present, lies in
a fixed compact region and has uniformly bounded nonlinearity. The singular
step is estimated in coordinates measuring distance from the singular point
and the corresponding neutral endpoint. The subsequent neutral excursion is
treated in endpoint distance coordinates, where the one step distortion
bounds telescope along the excursion. This yields a Euclidean distortion
estimate in terms of the length of the final image interval. Uniform expansion
of the induced branches then converts this estimate into the separation time
bound required in \((P2)\).

\begin{lem}\label{ustb}
There exists \(\varepsilon>0\) such that, for every \(\omega\in\Omega\),
\(
\operatorname{dist}
\left(
\delta_1^-(\omega),
\{0,-1\}
\right)
\geq \varepsilon .
\)
Consequently,
\[
\sup_{\omega\in\Omega}
\sup_{x\in h_\omega^{-1}(\Delta_0^+(\sigma\omega))\cap\Lambda_\omega}
\left|
\frac{D^2h_\omega(x)}{Dh_\omega(x)}
\right|
<\infty .
\]
\end{lem}

\begin{proof}

By construction,
\(
h_\omega(\delta_1^-(\omega))=\Delta_0^+(\sigma\omega).
\)

We first show that \(\delta_1^-(\omega)\) is uniformly separated from \(0\).
The intervals \(\Delta_0^+(\omega)\) are uniformly separated from the
neutral endpoint \(1\). Indeed, their endpoints depend continuously on
the parameter and the parameter range is compact. Hence there exists
\(\eta>0\) such that
\[
\Delta_0^+(\omega)\subset (0,1-\eta]
\qquad
\text{for every }\omega\in\Omega .
\]
On the other hand, the local form of \(h_\omega\) on the negative side
of the singular point gives, uniformly in \(\omega\),
\[
h_\omega(x)
=
1-b(\omega)|x|^{k(\omega)}+o(|x|^{k(\omega)})
\qquad
\text{as }x\to0^- .
\]
Since \(b(\omega)\) is bounded away from \(0\) and infinity, and
\(k(\omega)\in[k_1,k_2]\subset(0,1)\), there exists
\(\varepsilon_0>0\) such that
\[
h_\omega(x)>1-\eta
\qquad
\text{for every }\omega\in\Omega
\text{ and every }x\in(-\varepsilon_0,0).
\]
Thus
\(
\delta_1^-(\omega)\cap(-\varepsilon_0,0)=\varnothing
\)
for every \(\omega\).

We next show that \(\delta_1^-(\omega)\) is uniformly separated from \(-1\).
Since \(\Delta_0^+(\omega)\subset I^+\), it is enough to use the fact
that points sufficiently close to \(-1\) remain in \(I^-\) after one
iterate. More precisely, the local form near \(-1\) gives
\[
h_\omega(x)
=
x+a(\omega)|x+1|^{\gamma(\omega)}
+o(|x+1|^{\gamma(\omega)})
\quad
\text{as }x\to -1^+ ,
\]
uniformly in \(\omega\). Therefore there exists
\(\varepsilon_1>0\) such that
\[
h_\omega(x)<0
\quad
\text{for every }\omega\in\Omega
\text{ and every }x\in(-1,-1+\varepsilon_1).
\]
Since \(h_\omega(\delta_1^-(\omega))=\Delta_0^+(\sigma\omega)\subset I^+\), this
implies
\(
\delta_1^-(\omega)\cap(-1,-1+\varepsilon_1)=\varnothing
\)
for every \(\omega\).

Let
\(
\varepsilon=\min\{\varepsilon_0,\varepsilon_1\}.
\)
Then
\[
\delta_1^-(\omega)\subset[-1+\varepsilon,-\varepsilon]
\]
for every \(\omega\), and hence
\(
\operatorname{dist}(\delta_1^-(\omega),\{0,-1\})\geq \varepsilon .
\)

It remains to bound the nonlinearity. The family \(h_\omega\) is
uniformly \(C^2\) on compact subsets of \(I\setminus\{-1,0,1\}\), and
\[
[-1+\varepsilon,-\varepsilon]
\subset I\setminus\{-1,0,1\}.
\]
Moreover, by the expansion assumption, \(Dh_\omega\) is bounded away
from zero on this compact set, uniformly in \(\omega\). Therefore,
by compactness of the parameter range,
\[
\sup_{\omega\in\Omega}
\sup_{x\in[-1+\varepsilon,-\varepsilon]}
\left|
\frac{D^2h_\omega(x)}{Dh_\omega(x)}
\right|
<\infty .
\]
Since \(\delta_1^-(\omega)\subset[-1+\varepsilon,-\varepsilon]\), the claimed
bound follows.
\end{proof}

We now give the singular distortion contribution near the origin. 
\begin{lem}
\label{lem:GH-singular-step}
There exist constants \(\varepsilon_1>0\) and \(C>0\) such that the
following holds. For \(\omega\in\Omega\) and either choice of sign,
define
\[
\Psi_\omega^\pm(t)
:=
1-h_\omega(\pm t),
\qquad
0<t\leq\varepsilon_1.
\]
Then, for every \(0<u<v\leq\varepsilon_1\),
\[
\left|
\log
\frac{D\Psi_\omega^\pm(v)}
     {D\Psi_\omega^\pm(u)}
\right|
\leq
C\log
\frac{\Psi_\omega^\pm(v)}
     {\Psi_\omega^\pm(u)}.
\]
The constants \(\varepsilon_1\) and \(C\) are independent of
\(\omega\) and of the choice of sign.
\end{lem}

\begin{proof}
Fix \(\omega\in\Omega\) and a choice of sign, and write
\(
S=\Psi_\omega^\pm,
\qquad
k=k(\omega),
\qquad
b=b(\omega).
\)
The uniform local expansions at the singular point give
\[
\Psi(t)=bt^k+o(t^k),
\quad
D\Psi(t)=bkt^{k-1}+o(t^{k-1}),
\quand 
D^2\Psi(t)=bk(k-1)t^{k-2}+o(t^{k-2}),
\]
uniformly in \(\omega\) and in the choice of sign. Since
\[
b(\omega)\in[b_1,b_2],
\qquad
k(\omega)\in[k_1,k_2]\subset(0,1),
\]
we may choose \(\varepsilon_1>0\), independently of \(\omega\), such
that
\[
\frac{D\Psi(t)}{\Psi(t)}
\geq
\frac{c_0}{t},
\qquad
\left|
\frac{D^2\Psi(t)}{D\Psi(t)}
\right|
\leq
\frac{C_0}{t}
\]
for every \(0<t\leq\varepsilon_1\), where \(c_0,C_0>0\) are uniform.

It follows that
\[
\left|
\frac{D^2\Psi(t)}{D\Psi(t)}
\right|
\leq
C\frac{D\Psi(t)}{\Psi(t)}
\]
for some uniform \(C>0\). Therefore,
\[
\left|
\log
\frac{D\Psi(v)}{D\Psi(u)}
\right|
\leq
\int_u^v
\left|
\frac{D^2\Psi(t)}{D\Psi(t)}
\right|\,dt
\leq
C\int_u^v
\frac{D\Psi(t)}{\Psi(t)}\,dt
=
C\log\frac{\Psi(v)}{\Psi(u)}.
\]
\end{proof}

We now give the distortion contribution near the neutral points.
\begin{lem}
\label{lem:GH-neutral-step}
There exist constants \(\varepsilon_0>0\) and \(C>0\) such that the
following holds. For \(\omega\in\Omega\) and either choice of sign, define
\[
\Phi_\omega^\pm(t)
:=
1+h_\omega\bigl(\pm(1-t)\bigr),
\qquad
0<t\leq\varepsilon_0.
\]
Then, for every \(0<u<v\leq\varepsilon_0\),
\[
\left|
\log
\frac{D\Phi_\omega^\pm(v)}
     {D\Phi_\omega^\pm(u)}
\right|
\leq
C\left(
\log
\frac{\Phi_\omega^\pm(v)}
     {\Phi_\omega^\pm(u)}
-
\log\frac{v}{u}
\right).
\]
The constants \(\varepsilon_0\) and \(C\) are independent of
\(\omega\) and of the choice of sign.
\end{lem}

\begin{proof}
Fix \(\omega\in\Omega\) and a choice of sign. To simplify notation,
write
\[
\Phi=\Phi_\omega^\pm,
\qquad
\gamma=\gamma(\omega),
\qquad
a=a(\omega).
\]
The uniform local expansions of \(h_\omega\) at the neutral endpoints
give
\[
\Phi(t)=t+at^\gamma+o(t^\gamma),
\]
uniformly in \(\omega\) and in the choice of sign. Hence, after
decreasing \(\varepsilon_0>0\) if necessary, there exist uniform
constants \(c_1,c_2>0\) such that
\[
c_1t^\gamma
\leq
\Phi(t)-t
\leq
c_2t^\gamma,
\qquad
0<t\leq\varepsilon_0.
\]
In particular, we may assume that
\(
t\leq\Phi(t)\leq2t.
\)

Set
\(
\phi(t):=\Phi(t)-t.
\)
Since \(h_\omega\) is convex on each monotonicity interval, \(\Phi\),
and hence \(\phi\), is convex. Choose \(0<\vartheta<1\) sufficiently
small that
\(
c_1\vartheta-c_2\vartheta^{\gamma_1}>0.
\)
By convexity,
\[
\phi'(t)
\geq
\frac{\phi(t)-\phi(\vartheta t)}
     {(1-\vartheta)t}.
\]
Therefore,
\[
\begin{aligned}
tD\Phi(t)-\Phi(t)
=
t\phi'(t)-\phi(t)\geq
\frac{\vartheta\phi(t)-\phi(\vartheta t)}
     {1-\vartheta}\geq
\frac{c_1\vartheta-c_2\vartheta^\gamma}
     {1-\vartheta}t^\gamma\geq
c_0t^\gamma
\end{aligned}
\]
for some uniform constant \(c_0>0\), since
\(\gamma\geq\gamma_1\).

The uniform second derivative estimates near the neutral endpoints
give
\[
|D^2\Phi(t)|
\leq
C_0t^{\gamma-2}.
\]
Moreover, the expansion assumption implies
\(
D\Phi(t)\geq1.
\)
It follows that
\[
\frac{d}{dt}\log\frac{\Phi(t)}{t}
=
\frac{tD\Phi(t)-\Phi(t)}{t\Phi(t)}
\geq
\frac{c_0}{2}t^{\gamma-2},
\]
whereas
\[
\left|
\frac{D^2\Phi(t)}{D\Phi(t)}
\right|
\leq
C_0t^{\gamma-2}.
\]
Consequently,
\[
\left|
\frac{D^2\Phi(t)}{D\Phi(t)}
\right|
\leq
C\frac{d}{dt}\log\frac{\Phi(t)}{t}.
\]
Integrating between \(u\) and \(v\) gives
\[
\begin{aligned}
\left|
\log\frac{D\Phi(v)}{D\Phi(u)}
\right|
\leq
\int_u^v
\left|
\frac{D^2\Phi(t)}{D\Phi(t)}
\right|\,dt\leq
C\int_u^v
\frac{d}{dt}\log\frac{\Phi(t)}{t}\,dt=
C\left(
\log\frac{\Phi(v)}{\Phi(u)}
-
\log\frac{v}{u}
\right).
\end{aligned}
\]
\end{proof}

We now give the distortion contribution for excursion near the neutral region.
\begin{cor}
\label{cor:GH-neutral-excursion}
There exist constants \(d_0>0\) and \(C>0\) such that the following
holds. Let \(J_\omega\in\mathcal P_\omega\), let \(x,y\in J_\omega\),
and set
\[
E_i=h_\omega^i([x,y]).
\]
Suppose that \(E_p,\ldots,E_q\) form a consecutive neutral excursion.

For each \(p\leq i\leq q\), let
\(\xi_i\in\{-1,1\}\) be the neutral endpoint whose fixed
neighbourhood contains \(E_i\). Let \(\xi_{q+1}\) be the endpoint
associated with the image of the last neutral branch, and define
\[
d_i(z):=|z-\xi_i|,
\quad
p\leq i\leq q+1.
\]
\[\text{Write }\quad
d_i(E_i)=[u_i,v_i],
\quad
0<u_i<v_i,
\quad\text{ and set }\quad
Q_i:=\log\frac{v_i}{u_i}.
\]

Then
\[
\sum_{i=p}^{q}
\left|
\log
\frac{
\left|Dh_{\sigma^i\omega}
       \bigl(h_\omega^i(x)\bigr)\right|
}{
\left|Dh_{\sigma^i\omega}
       \bigl(h_\omega^i(y)\bigr)\right|
}
\right|
\leq
C\bigl(Q_{q+1}-Q_p\bigr).
\]

If, in addition, the exit interval
\(
E_{\mathrm{out}}:=E_{q+1}
\)
satisfies
\(
\operatorname{dist}
\bigl(E_{\mathrm{out}},\{-1,1\}\bigr)
\geq d_0,
\)
then
\[
\sum_{i=p}^{q}
\left|
\log
\frac{
\left|Dh_{\sigma^i\omega}
       \bigl(h_\omega^i(x)\bigr)\right|
}{
\left|Dh_{\sigma^i\omega}
       \bigl(h_\omega^i(y)\bigr)\right|
}
\right|
\leq
C\,m(E_{\mathrm{out}}).
\]
\end{cor}

\begin{proof}
For each \(p\leq i\leq q\), let
\[
\Phi_i
:=
d_{i+1}\circ h_{\sigma^i\omega}\circ d_i^{-1}
\]
be the expression of \(h_{\sigma^i\omega}\) in the distance
coordinates associated with \(E_i\) and \(E_{i+1}\). According to
whether \(E_i\) lies near \(1\) or near \(-1\), this is one of the
maps \(\Phi_{\sigma^i\omega}^{\pm}\) from
Lemma~\ref{lem:GH-neutral-step}.

Since
\[
E_{i+1}=h_{\sigma^i\omega}(E_i)
\qquad
d_i(E_i)=[u_i,v_i],
\quand
d_{i+1}(E_{i+1})=[u_{i+1},v_{i+1}],
\]
we have
\(
[u_{i+1},v_{i+1}]
=
\Phi_i([u_i,v_i]).
\)
The map \(\Phi_i\) is increasing in distance coordinates, and hence
\[
u_{i+1}=\Phi_i(u_i),
\qquad
v_{i+1}=\Phi_i(v_i).
\]

Set
\(
x_i=h_\omega^i(x),
\qquad
y_i=h_\omega^i(y).
\)
Since \(d_i\) and \(d_{i+1}\) are affine isometries, their derivatives
have modulus one. Therefore
\[
D\Phi_i\bigl(d_i(z)\bigr)
=
\left|Dh_{\sigma^i\omega}(z)\right|
\]
for \(z\in E_i\). Moreover,
\[
\{d_i(x_i),d_i(y_i)\}=\{u_i,v_i\}.
\]
Consequently,
\[
\left|
\log
\frac{
\left|Dh_{\sigma^i\omega}(x_i)\right|
}{
\left|Dh_{\sigma^i\omega}(y_i)\right|
}
\right|
=
\left|
\log
\frac{D\Phi_i(v_i)}
     {D\Phi_i(u_i)}
\right|.
\]

By Lemma~\ref{lem:GH-neutral-step},
\[
\begin{aligned}
\left|
\log
\frac{D\Phi_i(v_i)}
     {D\Phi_i(u_i)}
\right|
\leq
C\left(
\log\frac{\Phi_i(v_i)}{\Phi_i(u_i)}
-
\log\frac{v_i}{u_i}
\right)
=
C\left(
\log\frac{v_{i+1}}{u_{i+1}}
-
\log\frac{v_i}{u_i}
\right)=
C(Q_{i+1}-Q_i).
\end{aligned}
\]
Summing over \(i=p,\ldots,q\) gives
\[
\begin{aligned}
\sum_{i=p}^{q}
\left|
\log
\frac{
\left|Dh_{\sigma^i\omega}
       \bigl(h_\omega^i(x)\bigr)\right|
}{
\left|Dh_{\sigma^i\omega}
       \bigl(h_\omega^i(y)\bigr)\right|
}
\right|
\leq
C\sum_{i=p}^{q}(Q_{i+1}-Q_i)=
C(Q_{q+1}-Q_p).
\end{aligned}
\]

Suppose now that
\[
\operatorname{dist}
\bigl(E_{\mathrm{out}},\{-1,1\}\bigr)
\geq d_0,
\qquad
E_{\mathrm{out}}=E_{q+1}.
\]
Since \(d_{q+1}\) is distance from one of the neutral endpoints, this
implies
\[
u_{q+1}\geq d_0.
\]
Therefore,
\[
\begin{aligned}
Q_{q+1}
=
\log\frac{v_{q+1}}{u_{q+1}}=
\log\left(
1+\frac{v_{q+1}-u_{q+1}}{u_{q+1}}
\right)\leq
\frac{v_{q+1}-u_{q+1}}{u_{q+1}}=
\frac{m(E_{\mathrm{out}})}{u_{q+1}}\leq
d_0^{-1}m(E_{\mathrm{out}}).
\end{aligned}
\]
Since \(Q_p\geq0\), it follows that
\[
Q_{q+1}-Q_p
\leq
Q_{q+1}
\leq
d_0^{-1}m(E_{\mathrm{out}}),
\]
which proves the second assertion.
\end{proof}

We first establish a distortion estimate for the induced branches in terms of the Euclidean distance between their images.
\begin{prop}
\label{prop:GH-euclidean-distortion}
There exists \(C>0\) such that, for every \(\omega\in\Omega\), every
\(J_\omega\in\mathcal P_\omega\), and every \(x,y\in J_\omega\),
\[
\left|
\log
\frac{
\left|Dh_\omega^{R_\omega(J_\omega)}(x)\right|
}{
\left|Dh_\omega^{R_\omega(J_\omega)}(y)\right|
}
\right|
\leq
C\left|
h_\omega^{R_\omega(J_\omega)}(x)
-
h_\omega^{R_\omega(J_\omega)}(y)
\right|.
\]
\end{prop}

\begin{proof}
Fix \(J_\omega\in\mathcal P_\omega\), and set
\(
R:=R_\omega(J_\omega),
\qquad
x_i:=h_\omega^i(x),
\qquad
y_i:=h_\omega^i(y).
\)
Let \(E_i\) be the interval with endpoints \(x_i\) and \(y_i\). By
the chain rule,
\begin{equation}\label{dist_sum5.9}
    \left|
\log
\frac{|Dh_\omega^R(x)|}{|Dh_\omega^R(y)|}
\right|
\leq
\sum_{i=0}^{R-1}
\left|
\log
\frac{
|Dh_{\sigma^i\omega}(x_i)|
}{
|Dh_{\sigma^i\omega}(y_i)|
}
\right|.
\end{equation}

Let
\(
U_{\mathrm{neu}}
:=
U_{1^-}\cup U_{-1^+}.
\) We first consider
\(
J_\omega=\delta_n^-(\omega),
R=n.
\)
In this case, \(i=0\) is the singular step and the neutral excursion
begins at
\(
p=1.
\)

Let \(q\geq1\) be maximal such that
\(
E_i\subset U_{\mathrm{neu}},
1\leq i\leq q.
\)
Thus
\(
E_{\mathrm{out}}:=E_{q+1}
\)
is the first interval after the maximal neutral block.

Let
\[
d_1(E_1)=[u_1,v_1],
\qquad
Q_1:=\log\frac{v_1}{u_1},
\]
where \(d_1\) is distance from the neutral endpoint corresponding to
\(E_1\). Lemma~\ref{lem:GH-singular-step} gives
\[
\left|
\log
\frac{|Dh_\omega(x)|}{|Dh_\omega(y)|}
\right|
\leq
CQ_1.
\]
Corollary~\ref{cor:GH-neutral-excursion}, applied with \(p=1\), gives
\[
\sum_{i=1}^{q}
\left|
\log
\frac{
|Dh_{\sigma^i\omega}(x_i)|
}{
|Dh_{\sigma^i\omega}(y_i)|
}
\right|
\leq
C(Q_{q+1}-Q_1).
\]
Consequently, the singular and neutral contributions are bounded by
\(
CQ_1+C(Q_{q+1}-Q_1)
=
CQ_{q+1}.
\)

By the uniform geometry of the return branches,
\(E_{q+1}\) is uniformly separated from the neutral endpoints.
Therefore,
\(
Q_{q+1}\lesssim m(E_{q+1}).
\)

Moreover, all subsequent images before the return lie in a fixed
compact subset of
\(
I\setminus\{-1,0,1\},
\)
and their number is uniformly bounded. Hence
\[
\left|
\log
\frac{
|Dh_{\sigma^i\omega}(x_i)|
}{
|Dh_{\sigma^i\omega}(y_i)|
}
\right|
\leq
C\,m(E_i)
\]
for every remaining index \(i\). Since
\[
|Dh_{\sigma^i\omega}|>1,
\]
interval lengths do not decrease along the remaining orbit, and thus
\(
m(E_i)\leq m(E_R).
\)
It follows that all the remaining contributions in \eqref{dist_sum5.9} are
bounded by \(C\,m(E_R)\). Therefore,
\[
\left|
\log
\frac{|Dh_\omega^R(x)|}{|Dh_\omega^R(y)|}
\right|
\leq
C\,m(E_R).
\]

We now consider
\(
J_\omega=\delta_{1,n}^-(\omega),
\quad
R=n+1.
\)
In this case, \(i=0\) is the transition step, \(i=1\) is the
singular step, and the neutral excursion begins at
\(
p=2.
\)

Since
\(
J_\omega
\subset
\delta_1^-(\omega)
=
h_\omega^{-1}
\bigl(\Delta_0^+(\sigma\omega)\bigr)
\cap\Lambda_\omega,
\)
Lemma~\ref{ustb} and the mean value theorem give
\[
\left|
\log
\frac{|Dh_\omega(x)|}{|Dh_\omega(y)|}
\right|
\leq
C\,m(E_0).
\]
The derivative is uniformly bounded away from one on
\(\delta_1^-(\omega)\), and all subsequent derivatives have modulus
greater than one. Hence
\[
m(E_0)\lesssim m(E_1)\leq m(E_R),
\]
so the transition contribution is bounded by \(C\,m(E_R)\).

The interval \(E_1\) lies on a singular branch, and its image \(E_2\)
lies in a neutral neighbourhood. Let \(q\geq2\) be maximal such that
\(
E_i\subset U_{\mathrm{neu}},
\quad
2\leq i\leq q.
\)
Thus
\(
E_{\mathrm{out}}:=E_{q+1}
\)
is the first interval after the maximal neutral block.

Write
\(
d_2(E_2)=[u_2,v_2],
Q_2:=\log\frac{v_2}{u_2}.
\)
Applying Lemma~\ref{lem:GH-singular-step} to
\(h_{\sigma\omega}\) on \(E_1\) gives
\[
\left|
\log
\frac{
|Dh_{\sigma\omega}(x_1)|
}{
|Dh_{\sigma\omega}(y_1)|
}
\right|
\leq
CQ_2.
\]
Corollary~\ref{cor:GH-neutral-excursion}, applied with \(p=2\), gives
\[
\sum_{i=2}^{q}
\left|
\log
\frac{
|Dh_{\sigma^i\omega}(x_i)|
}{
|Dh_{\sigma^i\omega}(y_i)|
}
\right|
\leq
C(Q_{q+1}-Q_2).
\]
Thus the singular and neutral contributions are bounded by
\(
CQ_2+C(Q_{q+1}-Q_2)
=
CQ_{q+1}.
\)
As in the first case,
\[
Q_{q+1}\lesssim m(E_{q+1})\leq m(E_R),
\]
and the remaining compact contributions are bounded by
\(C\,m(E_R)\). Combining the transition, singular, neutral, and
compact contributions yields
\[
\left|
\log
\frac{|Dh_\omega^R(x)|}{|Dh_\omega^R(y)|}
\right|
\leq
C\,m(E_R).
\]

Finally,
\(
m(E_R)
=
|x_R-y_R|
=
\left|
h_\omega^R(x)-h_\omega^R(y)
\right|,
\)
which proves the claim.

The finitely many return atoms whose itineraries do not enter the
chosen local singular or neutral neighbourhoods are handled by the
same argument using uniform \(C^2\) bounds and compactness.
\end{proof}

We now give uniform expansion estimates for the induced branches.
\begin{lem}
\label{lem:GH-induced-expansion}
There exists \(\lambda>1\) such that, for every \(\omega\in\Omega\),
every \(J_\omega\in\mathcal P_\omega\), and every \(x\in J_\omega\),
\(
\left|Dh_\omega^{R_\omega}(x)\right|
\geq
\lambda .
\)
\end{lem}

\begin{proof}
The bases \(\Lambda_\omega\) are uniformly separated from the neutral
endpoint \(-1\). Near the singular point \(0\),
\[
|Dh_\omega(x)|
\asymp
|x|^{k(\omega)-1}
\longrightarrow\infty
\]
uniformly over the parameter range. Hence there exists
\(\varepsilon>0\) such that
\[
|Dh_\omega(x)|\geq2
\qquad
\text{for }x\in\Lambda_\omega\cap(-\varepsilon,0).
\]
On the remaining compact part of the bases, continuity in the
parameter and the assumption \(|Dh_\omega|>1\) give
\[
\inf_{\omega\in\Omega}
\inf_{x\in\Lambda_\omega\setminus(-\varepsilon,0)}
|Dh_\omega(x)|
>1.
\]
Thus there exists \(\lambda>1\) such that
\(
|Dh_\omega(x)|\geq\lambda
\qquad
\text{for every }x\in\Lambda_\omega.
\)
Every subsequent derivative along a return branch has modulus greater
than one, so
\(
|Dh_\omega^{R_\omega}(x)|\geq\lambda.
\)
\end{proof}

We now give the distortion for the induced Grossmann--Horner map; condition (P2).
\begin{prop}
\label{distortion'}
There exist constants \(D>0\) and \(0<\rho<1\) such that, for every
\(\omega\in\Omega\), every \(J_\omega\in\mathcal P_\omega\), and every
\(x,y\in J_\omega\),
\[
\left|
\frac{
JF_\omega^{R_\omega}(x)
}{
JF_\omega^{R_\omega}(y)
}
-1
\right|
\leq
D\,
\rho^{
s\left(
F_\omega^{R_\omega}(x),
F_\omega^{R_\omega}(y)
\right)
}.
\]
\end{prop}

\begin{proof}
Set
\(
z=h_\omega^{R_\omega}(x),
\qquad
z'=h_\omega^{R_\omega}(y).
\)
By Lemma~\ref{lem:GH-induced-expansion}, the inverse branches of the
induced map contract uniformly. Hence, for some \(0<\rho<1\),
\(
|z-z'|
\leq
C_0
\rho^{
s\left(
F_\omega^{R_\omega}(x),
F_\omega^{R_\omega}(y)
\right)
}.
\)
Proposition~\ref{prop:GH-euclidean-distortion} therefore gives
\[
\left|
\log
\frac{
|Dh_\omega^{R_\omega}(x)|
}{
|Dh_\omega^{R_\omega}(y)|
}
\right|
\leq
C_1
\rho^{
s\left(
F_\omega^{R_\omega}(x),
F_\omega^{R_\omega}(y)
\right)
}.
\]
Since the tower map has unit Jacobian on intermediate levels,
\[
JF_\omega^{R_\omega}
=
|Dh_\omega^{R_\omega}|
\]
on the base. Finally, using
\(
|e^t-1|\leq e^{|t|}|t|,
\)
and enlarging the constant if necessary, we obtain
\[
\left|
\frac{
JF_\omega^{R_\omega}(x)
}{
JF_\omega^{R_\omega}(y)
}
-1
\right|
\leq
D\,
\rho^{
s\left(
F_\omega^{R_\omega}(x),
F_\omega^{R_\omega}(y)
\right)
}.
\]
\end{proof}

\subsection{Tail Estimate}\label{sec:tail2}

We verify conditions \((P4)\), \((P6)\), and \((P7)\) for the fibred tower
\((\Delta,F)\). The quenched and annealed tail estimates are proved first; the
uniform finiteness condition \((P6)\) is verified separately.

Recall that the return time satisfies
\[
R_\omega(x)=n
\quad\text{on }\delta_n^-(\omega),
\qquad
R_\omega(x)=n+1
\quad\text{on }\delta_{1,n}^-(\omega).
\]

\begin{rem}
The estimates on \(m(\delta_n^\pm(\omega))\) below require bounds on
differences of the singular preimage endpoints \(y_n^\pm(\omega)\), not only
bounds on the endpoint sizes \(|y_n^\pm(\omega)|\). We keep this distinction
explicit in the proof.
\end{rem}

\begin{prop}\label{Tail'}
There exist constants \(C>0\), \(u>0\), \(v\in(0,1)\), a full measure set
\(G\subset\Omega\), and a random variable \(n_1:G\to\mathbb N\) such that for all
\(\omega\in G\) and \(n>n_1(\omega)\),
\[
m(R_\omega>n)
\lesssim
\frac{(\log n)^{\frac{q}{k_2(\gamma_1-1)}}}
{n^{\frac{1}{k_2(\gamma_1-1)}}}.
\]
Moreover,
\(
\mathbb P\{n_1(\omega)>n\}\le Ce^{-un^v}.
\)
\end{prop}

\begin{proof}
We proceed in three steps.

\medskip
\noindent
\textbf{Step 1: one dimensional tail estimates.}

By Corollary~\ref{cor:y_n_scaling} and the parameter bounds
\[
\gamma(\omega)\in[\gamma_1,\gamma_2],
\qquad
k(\omega)\in[k_1,k_2],
\]
there exists a full measure set \(G\) and a random variable \(n_1(\omega)\)
such that for all \(\omega\in G\) and \(n>n_1(\omega)\),
\[
m(\delta_n^-(\omega))
\lesssim
\frac{(\log n)^{\frac{q}{k_2(\gamma_1-1)}}}
{n^{\frac{1}{k_2(\gamma_1-1)}+1}},
\qquad
m(\delta_{1,n}^-(\omega))
\lesssim
\frac{(\log n)^{\frac{q}{k_2(\gamma_1-1)}}}
{n^{\frac{1}{k_2(\gamma_1-1)}+1}}.
\]

\medskip
\noindent
\textbf{Step 2: estimate of the return time tail.}

Using the definition of the return time,
\(
m(R_\omega>n)
=
\sum_{\ell>n}m(\delta_\ell^-(\omega))
+
\sum_{\ell+1>n}m(\delta_{1,\ell}^-(\omega)).
\)
Applying the estimates from Step 1, we obtain
\[
m(R_\omega>n)
\lesssim
\sum_{\ell>n}
\frac{(\log \ell)^{\frac{q}{k_2(\gamma_1-1)}}}
{\ell^{\frac{1}{k_2(\gamma_1-1)}+1}}.
\]
Since
\(
\frac{1}{k_2(\gamma_1-1)}>1,
\)
the tail sum gives
\[
m(R_\omega>n)
\lesssim
\frac{(\log n)^{\frac{q}{k_2(\gamma_1-1)}}}
{n^{\frac{1}{k_2(\gamma_1-1)}}}.
\]

\medskip
\noindent
\textbf{Step 3: control of the random cutoff.}

Let \(n_1(\omega)\) be the minimal time after which the estimates in Step 1
hold. The stretched exponential tail of \(n_1\) follows from the corresponding
estimate in Lemma~\ref{lem:low-xn-gh}. This proves \((P4)\).
\end{proof}

Next we have the annealed return time tails
\begin{cor}\label{cor:annealed-tail}
Under the assumptions of Proposition~\ref{Tail'},
\[
\int_\Omega m\{x\in\Lambda_\omega:R_\omega(x)=n\}\,d\mathbb P
\lesssim
\frac{(\log n)^{\frac{q}{k_2(\gamma_1-1)}}}
{n^{\frac{1}{k_2(\gamma_1-1)}+1}}.
\]
\end{cor}

\begin{proof}
Since
\(
R_\omega=n
\quad\text{on }\delta_n^-(\omega),
\qquad
R_\omega=n+1
\quad\text{on }\delta_{1,n}^-(\omega),
\)
we have
\[
m\{R_\omega=n\}
=
m(\delta_n^-(\omega))
+
m(\delta_{1,n-1}^-(\omega)).
\]
On the set \(\{n_1(\omega)\le n-1\}\), the one dimensional estimates from the
proof of Proposition~\ref{Tail'} give
\[
m\{R_\omega=n\}
\lesssim
\frac{(\log n)^{\frac{q}{k_2(\gamma_1-1)}}}
{n^{\frac{1}{k_2(\gamma_1-1)}+1}}.
\]
On the complementary set, we use the trivial bound \(m\{R_\omega=n\}\le1\) and
the stretched exponential estimate for \(n_1\). Hence
\[
\begin{aligned}
\int_\Omega m\{R_\omega=n\}\,d\mathbb P
&\lesssim
\frac{(\log n)^{\frac{q}{k_2(\gamma_1-1)}}}
{n^{\frac{1}{k_2(\gamma_1-1)}+1}}
+
\mathbb P\{n_1(\omega)>n-1\} \\
&\lesssim
\frac{(\log n)^{\frac{q}{k_2(\gamma_1-1)}}}
{n^{\frac{1}{k_2(\gamma_1-1)}+1}}.
\end{aligned}
\]
This proves \((P7)\).
\end{proof}

Next we have the uniform tower finiteness.
\begin{lem}\label{lem:GH-P6}
Assume the standing finiteness condition
\(
k_2(\gamma_2-1)<1.
\)
Then the Grossmann--Horner tower satisfies property \((P6)\). More precisely,
there exists \(M<\infty\) such that
\[
\bar m_\omega(\Delta_\omega)\le M
\]
for every \(\omega\in\Omega\).
\end{lem}

\begin{proof}
Set
\(r_G:=1/k_2(\gamma_2-1).\)
By the assumption \(k_2(\gamma_2-1)<1\), we have \(r_G>1\).

For the uniform finiteness estimate we use the deterministic worst parameters:
the slowest neutral exponent \(\gamma_2\) and the largest singular exponent
\(k_2\). The deterministic endpoint comparison and the singular inverse
estimate give
\[
m(\delta_n^-(\omega))
\lesssim
n^{-r_G-1},
\qquad
m(\delta_{1,n}^-(\omega))
\lesssim
n^{-r_G-1},
\]
uniformly in \(n\) and \(\omega\).

Using the return time structure,
\[
\bar m_\omega(\Delta_\omega)
=
\sum_{n\ge2} n\,m(\delta_n^-(\omega))
+
\sum_{n\ge1} (n+1)\,m(\delta_{1,n}^-(\omega)).
\]
Therefore
\[
\bar m_\omega(\Delta_\omega)
\lesssim
\sum_{n\ge1} n\,n^{-r_G-1}
=
\sum_{n\ge1} n^{-r_G}.
\]
The last series is finite because \(r_G>1\). Thus \((P6)\) holds.
\end{proof}

\subsubsection{Existence of sample measures and decay of correlations}
\label{ss:GH-correlations}

We apply the random tower results of \cite{BahBosRuz19} to the
Grossmann--Horner tower constructed in Section~\ref{sec:GH-tower}.
Let \(0<\rho<1\) be the contraction factor from
Proposition~\ref{distortion'}.
Since properties \((\mathrm P1)\)--\((\mathrm P7)\) are verified,  
Theorem~4.1 of \cite{BahBosRuz19} yields an \(F\)-equivariant family of
absolutely continuous probability measures
\[
\bar\mu_\omega=\bar\rho_\omega\,\bar m_\omega,
\qquad
(F_\omega)_*\bar\mu_\omega=\bar\mu_{\sigma\omega},
\]
whose densities \(\bar\rho_\omega\) lie in the tower Hölder space with
random Hölder constants \(K_\omega\) satisfying
\(
\mathbb P\{K_\omega>n\}\le C_0 e^{-c_0 n^{\vartheta}}
\)
for some \(C_0,c_0>0\) and \(0<\vartheta<1\).

Set
\(
\mu_\omega=(\pi_\omega)_*\bar\mu_\omega,
\qquad
d\boldsymbol\mu(\omega,x)=d\mu_\omega(x)\,d\mathbb P(\omega).
\)
Because \(\pi_{\sigma\omega}\circ F_\omega=h_\omega\circ\pi_\omega\),
we have \((h_\omega)_*\mu_\omega=\mu_{\sigma\omega}\) for
\(\mathbb P\)-a.e.\ \(\omega\); hence \(\boldsymbol\mu\) is \(G\)-invariant.

\paragraph{Lifted observables.}
Let fibre observables
\(
\varphi=(\varphi_\omega)_{\omega\in\Omega},
\psi=(\psi_\omega)_{\omega\in\Omega}
\)
satisfy the hypotheses of Theorem~\ref{thm3} and define
\(
\bar\varphi_\omega=\varphi_\omega\circ\pi_\omega,\qquad
\bar\psi_\omega=\psi_\omega\circ\pi_\omega,\qquad
\Psi_\omega=\bar\psi_\omega\,\bar\rho_\omega .
\)
Uniform Hölder bounds on \(\psi_\omega\) and exponential contraction give
\(
\bar\psi_\omega\in\mathcal F^1_\rho
\)
with norm independent of \(\omega\).
Because \(\bar\rho_\omega\in\mathcal F^1_\rho\) with norm \(K_\omega\),
the product \(\Psi_\omega\) lies in
\(
\mathcal F^1_{\rho}\!
\)
with random norm \(C_\psi K_\omega\).

\paragraph{Application of the tower theorem.}
Proposition~\ref{Tail'} gives the quenched tail exponent
\[
a=\frac1\zeta, 
\qquad
\zeta=k_2(\gamma_1-1).
\]
Applying Theorem~4.2 of \cite{BahBosRuz19} to
\((\bar\varphi_\omega,\Psi_\omega)\) yields, for every \(\delta>0\),
\[
\bigl|\mathrm{Cor}^{(\mathrm F)}_{n,\omega}(\varphi,\psi)\bigr|
\le
C_\omega C_{\varphi,\psi}\,
n^{-\left(\frac1\zeta-1-\delta\right)},
\]
and the same bound for past correlations. The random prefactor
\(C_\omega\) inherits a stretched exponential tail from \(K_\omega\); hence
there are \(C,u'>0\) and \(0<v'<1\) with
\(
\mathbb P\{C_\omega>n\}\le C e^{-u'n^{v'}}.
\)
This completes the proof of Theorem~\ref{thm3}.

\section{Discrete and uniform parameter laws}\label{sec:DDUD}

In this section we verify the lower drift assumptions for the discrete and
uniform parameter laws. The arrays \(A_{n,k}\) and \(B_{n,k}\) are triangular
arrays whose summands are not, in general, independent, due to the endpoint
cocycle. We therefore compare them from below with one coordinate arrays. These
comparison arrays depend only on \(\omega_{n-k}\), and are independent for fixed
\(n\).

Throughout this section, constants are uniform in \(n,k\), and \(\omega\), and
may change from line to line.

\subsection{The Pikovsky endpoint drift}

Write
\(
\alpha_{n-k}(\omega)=\alpha(\sigma^{n-k}\omega).
\)
The endpoint drift array can be bounded from below as follows.

Comparison of arrays for Pikovsky maps.
\begin{lem}
\label{lem:pik-comparison}
There exist \(k_0\in\mathbb N\) and \(0<d_A<D_A<\infty\) such that, for
\(n\ge k\ge k_0\),
\(
A_{n,k}(\omega)\ge \widetilde A_{n,k}(\omega),
\)
where
\[
\widetilde A_{n,k}(\omega)
=
(\alpha_1-1)d_A
\left(
\frac{c_k(\alpha_1)}
{k^{1/(\alpha_1-1)}}
\right)^{\alpha_{n-k}(\omega)-\alpha_1}
-\frac{\alpha_1(\alpha_1-1)}2D_A^2
\left(
\frac{c_k(\alpha_2)}
{k^{1/(\alpha_2-1)}}
\right)^{2\alpha_{n-k}(\omega)-\alpha_1-1}.
\]
Moreover, for fixed \(n\), the variables
\(\{\widetilde A_{n,k}\}_{k=k_0}^n\) are independent and uniformly bounded.
\end{lem}

\begin{proof}
The endpoint estimates imply that the error terms in the local expansion
converge uniformly to zero on the parameter range. Hence, for \(k\ge k_0\),
the coefficient containing the endpoint correction is bounded above and below
by positive constants. The comparison follows by using the lower bound in the
positive term and the upper bound in the negative term. The resulting array
depends only on \(\alpha(\sigma^{n-k}\omega)\), which gives independence for
fixed \(n\).
\end{proof}

We now give expectation estimates.
\begin{lem}
\label{lem:pik-expectation}
For the comparison array above, we have
\[
\frac1n\sum_{k=k_0}^n\mathbb E_\nu\widetilde A_{n,k}
\to c_{A,\mathrm{DD}}>0
\qquad
\frac{\log n}{n}\sum_{k=k_0}^n
\mathbb E_\nu\widetilde A_{n,k}
\to c_{A,\mathrm{UD}}>0
\]
for the discrete law and the uniform law respectively.
\end{lem}

\begin{proof}
For the discrete law, the atom at \(\alpha_1\) gives a positive constant
contribution to the expectation of the comparison array
\(\widetilde A_{n,k}\). The contributions from the remaining atom and the
negative correction term are polynomially smaller. Hence there exists
\(c_A>0\) such that
\[
\frac1n\sum_{k=1}^n
\mathbb E_\nu(\widetilde A_{n,k})
\ge c_A
\]
for all sufficiently large \(n\).

For the uniform law, the relevant term is
\[
\int_{\alpha_1}^{\alpha_2}
\left(
\frac{c_k(\alpha_1)}
{k^{1/(\alpha_1-1)}}
\right)^{t-\alpha_1}\,dt .
\]
Since
\[
-\log
\frac{c_k(\alpha_1)}
{k^{1/(\alpha_1-1)}}
=
\frac1{\alpha_1-1}\log k+O(1),
\]
the integral is comparable to \(1/\log k\). The negative term is of smaller
order. The stated limits are followed.
\end{proof}

By Lemma~\ref{lem:pik-comparison}, the comparison array
\(\widetilde A_{n,k}\) consists of independent uniformly bounded variables for
fixed \(n\). Hence Hoeffding's inequality gives, for every \(t>0\),
\[
\mathbb P\left(
\left|
\sum_{k=k_0}^n
(\widetilde A_{n,k}-\mathbb E_\nu\widetilde A_{n,k})
\right|>nt(\log n)^{-q}
\right)
\le
2\exp\left(-c\frac{n}{(\log n)^{2q}}\right).
\]
By Lemma~\ref{lem:pik-expectation}, there exists \(c_A>0\) such that
\[
\frac{(\log n)^q}{n}
\sum_{k=k_0}^n
\mathbb E_\nu\widetilde A_{n,k}
\ge 2c_A
\]
for all sufficiently large \(n\). Therefore,
\[
\mathbb P\left\{
\frac{(\log n)^q}{n}
\sum_{k=k_0}^n
\widetilde A_{n,k}<c_A
\right\}
\le
2\exp\left(-c\frac{n}{(\log n)^{2q}}\right).
\]
A Borel--Cantelli argument then gives a random cutoff \(N_A(\omega)\) such that
\[
\frac{(\log n)^q}{n}
\sum_{k=k_0}^n
\widetilde A_{n,k}(\omega)\ge c_A
\]
for all \(n\ge N_A(\omega)\), and
\(
\mathbb P\{N_A>n\}\le Ce^{-un^v}.
\)
Since \(A_{n,k}\ge \widetilde A_{n,k}\), Assumption~\ref{itm:A1} follows. The finitely many terms with \(k<k_0\) are uniformly bounded and can be
absorbed by enlarging the random cutoff by a deterministic amount. Hence
Assumption~\ref{itm:A1} follows.

\subsection{The Grossmann--Horner endpoint drift}

Write
\(
\gamma_{n-k}(\omega)=\gamma(\sigma^{n-k}\omega).
\)
The same argument applies to the Grossmann--Horner array.

We give a comparison of arrays for the Grossmann-Horner maps.
\begin{lem}
\label{lem:GH-comparison}
There exist \(k_0\in\mathbb N\) and \(0<d_B<D_B<\infty\) such that
\(
B_{n,k}(\omega)\ge \widetilde B_{n,k}(\omega),
\)
where
\[
\widetilde B_{n,k}(\omega)
=
d_B
\left(
\frac{c_k(\gamma_1)}
{k^{1/(\gamma_1-1)}}
\right)^{\gamma_{n-k}(\omega)-\gamma_1}
-\frac{\gamma_1-1}{2}D_B^2
\left(
\frac{c_k(\gamma_2)}
{k^{1/(\gamma_2-1)}}
\right)^{2\gamma_{n-k}(\omega)-\gamma_1-1}.
\]
For fixed \(n\), the variables
\(\{\widetilde B_{n,k}\}_{k=k_0}^n\) are independent and uniformly bounded.
\end{lem}

\begin{proof}
The endpoint estimates imply that the correction terms in the local expansion
converge uniformly to zero. Since the parameter range is compact and
\(a\ge a_1>0\), the coefficient in the drift expression is bounded above and
below by positive constants. The comparison follows as in the Pikovsky case.
The resulting array depends only on the coordinate
\(\gamma(\sigma^{n-k}\omega)\), giving independence for fixed \(n\).
\end{proof}

We now give the expectation estimates.
\begin{lem}
\label{lem:GH-expectation}
For the comparison array,
\[
\frac1n\sum_{k=k_0}^n
\mathbb E_\nu\widetilde B_{n,k}
\to c_{B,\mathrm{DD}}>0
\qquad
\frac{\log n}{n}\sum_{k=k_0}^n
\mathbb E_\nu\widetilde B_{n,k}
\to c_{B,\mathrm{UD}}>0
\]
under \ref{itm:GHDD} and under \ref{itm:GHUD} respectively.
\end{lem}

\begin{proof}
Only the \(\gamma\)-coordinate affects the order of the terms. Under \ref{itm:GHDD},
the atom at \(\gamma_1\) gives a positive contribution, while the remaining
terms decay polynomially. Under\ref{itm:GHUD}, the same logarithmic averaging argument
as above gives
\(
\mathbb E_\nu\widetilde B_{n,k}\asymp \frac1{\log k}.
\)
The negative term is of smaller order in both cases.
\end{proof}

The same concentration argument applied to the comparison array
\(\widetilde B_{n,k}\), together with Lemma~\ref{lem:GH-expectation}, yields a
random cutoff \(N_B(\omega)\) with stretched-exponential tail such that
\[
\frac{(\log n)^q}{n}
\sum_{k=k_0}^n
\widetilde B_{n,k}(\omega)\ge c_B
\]
for all \(n\ge N_B(\omega)\). Since
\(B_{n,k}\ge\widetilde B_{n,k}\), the finitely many terms with \(k<k_0\) are uniformly bounded and can be
absorbed by enlarging \(N_B\) by a deterministic amount. Hence
Assumption~\ref{itm:B1} follows.

\subsection{Consequences}

The estimates above verify the Pikovsky drift assumption for the discrete and
uniform laws on \([\alpha_1,\alpha_2]\). Hence the concrete
\ref{itm:DD}/\ref{itm:UD} Pikovsky results follow from the conditional theorem
under Assumption~\ref{itm:A1}.

Similarly, the Grossmann--Horner comparison and expectation estimates verify
Assumption~\ref{itm:B1} for the laws \ref{itm:GHDD}/\ref{itm:GHUD}. This
assumption controls the neutral endpoint recursion, so only the
\(\gamma\)-coordinate appears in the comparison array. The singularity
parameter \(k\) enters later through
\[
|y_n^\pm(\omega)|
\asymp
(1\mp x_{n-1}^{\mp}(\sigma\omega))^{1/k(\omega)}.
\]
Consequently, the return time exponent and the resulting correlation rate are
controlled by
\(
\zeta=k_2(\gamma_1-1),
\)
which is the exponent obtained from Assumption~\ref{itm:B1} and the singular
inverse estimate.

\subsection{Conclusion}

For the discrete and uniform parameter laws, the raw endpoint arrays are
bounded from below by independent one-coordinate comparison arrays. The
expectation estimates and Hoeffding's inequality yield the required lower drift
bounds with stretched exponential random cutoffs. Therefore,
\[
\ref{itm:DD}/\ref{itm:UD}\Longrightarrow\ref{itm:A1},
\qquad
\ref{itm:GHDD}/\ref{itm:GHUD}\Longrightarrow\ref{itm:B1},
\]
and the concrete quenched mixing results follow from the corresponding
conditional theorems.

\bibliography{MuhRuz}
\end{document}